\newcommand*{\doi}[1]{\href{http://dx.doi.org/\detokenize{#1}}{doi}}
\newcommand{\E}{\mathbb{E}}
\newcommand{\EE}{\mathbb{E}}
\renewcommand{\H}{\mathcal{H}}
\newcommand{\I}{\mathds{1}}
\newcommand{\N}{\mathbb{N}}
\newcommand{\NN}{\mathbb{N}}
\renewcommand{\o}{\mathbf{0}}
\newcommand{\PPP}{\mathcal{P}}
\newcommand{\PP}{\mathbb{P}}
\newcommand{\Q}{\mathbb{Q}}
\newcommand{\R}{\mathbb{R}}
\renewcommand{\v}{{\boldsymbol{v}}}
\newcommand{\X}{\mathbf{X}}
\newcommand{\Y}{\mathbf{Y}}
\newcommand{\Z}{\mathbb{Z}}
\newcommand{\ZZ}{\mathbb{Z}}
\newcommand{\limites}[2]{\overset{#1}{\underset{#2}{\longrightarrow}}}
\theoremstyle{plain}
\newtheorem{theorem}{Theorem}
\newtheorem{lemma}[theorem]{Lemma}
\newtheorem{proposition}[theorem]{Proposition}
\newtheorem*{assumption*}{Assumption}
\newtheorem*{lemma*}{Lemma}
\newtheorem*{proposition*}{Proposition}
\newtheorem*{theorem*}{Theorem}
\theoremstyle{definition}
\newtheorem*{definition*}{Definition}
\theoremstyle{remark}
\newtheorem*{remark*}{Remark}
\date{}
\renewcommand{\geq}{\geqslant}
\renewcommand{\ge}{\geqslant}
\renewcommand{\leq}{\leqslant}
\renewcommand{\le}{\leqslant}
\renewcommand{\subset}{\subseteq}
\renewcommand{\supset}{\supseteq}
\renewcommand{\baselinestretch}{1.15} 
\definecolor{orange}{cmyk}{0,0,0.2,0}
\definecolor{gray}{rgb}{.5,.5,.5}
\newcommand{\Vt}{\widetilde V}
\newcommand{\Mt}{\widetilde M}
\newcommand{\lest}{\le_{\rm st}} 
\newcommand{\Ir}{\mathcal I}
\newcommand{\pv}{\,;\,} 
\newcommand{\pp}{\,:\,} 
\newcommand{\Pb}{\boldsymbol{\mathcal P}}
\newcommand{\zetab}{\boldsymbol{\zeta}}
\newcommand{\xib}{\boldsymbol{\xi}}
\newcommand{\etal}{\overline\eta}
\newcommand{\etalb}{\boldsymbol{\overline\eta}}
\newcommand{\influence}[1]{\overset{}{\underset{#1}{\leadsto}}}
\newcommand{\Zt}{\widetilde Z}
\begin{document}

\title{Non-fixation for Biased Activated Random Walks}

\author{
L. T. Rolla\footnote{Argentinian National Research Council at the University of Buenos Aires, NYU-ECNU Institute of Mathematical Sciences at NYU Shanghai. \emph{E-mail address:} \href{mailto:leorolla@dm.uba.ar}{\nolinkurl{leorolla@dm.uba.ar}}} \qquad
L. Tournier\footnote{Université Paris 13, Sorbonne Paris Cité, LAGA, CNRS UMR 7539, F-93430 Villetaneuse, France. \emph{E-mail address:} \href{mailto:tournier@math.univ-paris13.fr}{\nolinkurl{tournier@math.univ-paris13.fr}}}
}

\maketitle

\begin{abstract}
We prove that the model of Activated Random Walks on $\ZZ^d$ with biased jump distribution does not fixate for any positive density, if the sleep rate is small enough, as well as for any finite sleep rate, if the density is close enough to $1$.
The proof uses a new criterion for non-fixation. 
We provide a pathwise construction of the process, of independent interest, used in the proof of this non-fixation criterion.
\end{abstract}


This preprint has the same numbering of sections, equations and theorems as the the
published article
``\emph{Ann. Inst. H. Poincaré Probab. Statist. 54 (2018): 938--951.}''

\paragraph{Keywords:} Interacting particle systems; Activated random walks; Absorbing-state phase transition

\paragraph{AMS 2010 subject classification:} 60K35, 82C20, 82C22, 82C26

\section{Introduction}

The Activated Random Walk model is a system of particles in either one of two states, active or passive. Active particles perform continuous-time independent random walks on $\Z^d$ with jump distribution $p(\cdot)$, and switch to passive state at rate $\lambda>0$ when they are alone on a site. Passive particles do not move but are reactivated immediately when visited by another particle. Initially, all particles are active.
In this process, competition occurs between the global spread of activity by diffusion and spontaneous local deactivations, which leads to a transition between two regimes, as either $\lambda$ decreases or the initial density $\mu$ of particles increases. For small $\mu$ or large $\lambda$, the system is expected to fixate, i.e., the process in any finite box eventually remains constant. For small $\lambda$ or $\mu$ close to~$1$, activity is believed to persist forever.

Fixation for small $\mu$ or large $\lambda$ was proved by Rolla and Sidoravicius~\cite{RollaSidoravicius12} in dimension~$1$.
Non-fixation was proved by Shellef~\cite{Shellef10} and by Amir and Gurel-Gurevich~\cite{AmirGurel-Gurevich10} when $\mu > 1$, and by Cabezas, Rolla and Sidoravicius~\cite{CabezasRollaSidoravicius14} when $\mu=1$.

These results have been recently extended or sharpened, under restrictive assumptions on the walks. In the case when particles perform simple symmetric random walks, fixation was proved in any dimension by Sidoravicius and Teixeira~\cite{SidoraviciusTeixeira17} for sufficiently small $\mu$. Basu, Ganguly and Hoffman~\cite{BasuGangulyHoffman15} showed that, in dimension $1$, non-fixation happens for sufficiently low sleep rate.

In the case of biased random walks, Taggi~\cite{Taggi16} proved non-fixation if the density $\mu$ is sufficiently close to~$1$.
More precisely, under the assumption that the jump distribution is biased, Taggi proved that, for $d=1$, non-fixation happens if $\mu > 1-F(\lambda)$, where $F(\lambda)$ is positive and tends to~$1$ as $\lambda\to0$ (see below for the definition of $F$).
In higher dimensions, Taggi shows that non-fixation happens when $\mu F(\lambda) > \nu_0$, where $\nu_0$ is the density of initially unoccupied sites.
As a consequence, for some particular families of distributions, non-fixation happens for some pair of $\lambda>0$ and $\mu<1$.
It also implies non-fixation for arbitrary $\lambda$ in case $\nu_0$ gets small as $\mu\to 1$ (e.g.\ for Bernoulli initial condition).

In this paper we extend the latter result, proving that, for biased walks in any dimension, non-fixation happens if $\mu > 1-F(\lambda)$.
Thus, for arbitrarily low density $\mu$, non-fixation happens provided the sleep rate $\lambda$ is small enough, and, for arbitrarily high $\lambda$, it happens provided $\mu$ is close enough to~$1$.
Let us give the precise statement.

For any vector $\v\in\R^d \setminus \{\o\}$, define the half-space
\(\H_\v=\{x\in\ZZ^d\,:\,x\cdot \v\le 0\}\)
and let
\begin{equation}\label{eq:defF}
F_\v(\lambda)=\E\big[ ({1+\lambda})^{-\ell_{\H_\v}}\big]
\end{equation}
where $\ell_{\H_\v}$ is the total time spent in $\H_\v$ by a discrete-time random walk with jump distribution $p(\cdot)$.
Notice that, if $X_n\cdot \v\to+\infty$ a.s., then $F_\v(\lambda) >0 $ for any $\lambda$, and $F_\v(\lambda) \to 1$ as $\lambda\to0^+$.

\begin{theorem}\label{thm:main}
For the Activated Random Walk on $\ZZ^d$ with sleep rate $\lambda$ and i.i.d.~initial condition with mean $\mu$,
if $\mu > 1 - F_\v(\lambda)$ for some $\v$, then a.s.\ the system does not fixate.
\end{theorem}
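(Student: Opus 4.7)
The argument decomposes into three steps.

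\emph{Pathwise construction.} At each site $x\in\ZZ^d$, place an independent i.i.d.\ stack of \emph{instructions} $(\iota^x_k)_{k\ge 1}$, each instruction being, independently, a $p$-distributed jump with probability $1/(1+\lambda)$ or a sleep instruction with probability $\lambda/(1+\lambda)$. A \emph{legal toppling} at $x$ consumes the next unused instruction at $x$: a jump moves one active particle at $x$, while a sleep attempt puts an isolated active particle at $x$ to sleep (and is illegal otherwise; any active particle arriving at a site with a sleeper reactivates it). The Abelian property ensures that the odometer $m(x)$, the total number of topplings at $x$, is independent of the toppling order, and that the resulting process has the law of the continuous-time ARW. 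This is the pathwise construction announced in the abstract.

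\emph{Non-fixation criterion and main estimate.} Non-fixation is equivalent to $m(x)=\infty$ a.s.\ at some (equivalently, every) $x$. The pathwise framework then suggests the following criterion: it suffices to exhibit, in an increasing family of boxes $V_n\uparrow\ZZ^d$, a schedule of legal topplings whose odometer at the origin grows unboundedly, which in turn follows from a global imbalance between ``escape mass'' through the $\v$-side of $V_n$ and ``absorption capacity'' inside $V_n$. To produce such a schedule I would use the Abelian property to release initial particles one at a time, in order of decreasing $x\cdot\v$, each particle following its own stack until it either completes a legal sleep or exits $V_n$ across the $\v$-boundary. The definition \eqref{def:F} gives that an isolated particle released from $x$ never attempts a legal sleep inside $x+\H_\v$ with probability exactly $F_\v(\lambda)$, yielding an expected escape mass of $\approx \mu F_\v(\lambda)|V_n|$; the absorption capacity is controlled by the initial number of empty sites, $\approx(1-\mu)|V_n|$. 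A careful accounting for wake-ups of earlier sleepers (which only create more activity) converts the mismatch into the sharp threshold $\mu>1-F_\v(\lambda)$.

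\emph{Main obstacle.} The delicate ingredient is the bookkeeping of interactions: a newly released particle can wake up a previously sleeping one, triggering cascades of renewed activity that a naive independent-escape count ignores. The pathwise construction is essential precisely because the Abelian property decouples randomness from scheduling and lets one process particles sequentially while invoking the isolated escape probability $F_\v(\lambda)$ cleanly for each. Turning the naive Taggi-type balance $\mu F_\v(\lambda)>1-\mu$ into the sharp condition $\mu+F_\v(\lambda)>1$ — which, crucially, reduces to $\mu>0$ as $\lambda\to 0^+$ since $F_\v(\lambda)\to 1$ — is where I expect the main technical effort to lie, and where the improvement over previous work must come from. A secondary technical point is to show that boundary effects in the finite box $V_n$ contribute only $o(|V_n|)$, so that the isolated escape probability $F_\v(\lambda)$ (defined on all of $\ZZ^d$) may legitimately be applied to particles deep inside $V_n$.
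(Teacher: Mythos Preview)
Your proposal has the right scaffolding (site-wise Diaconis--Fulton stacks, Abelian property, the one-particle escape probability $F_\v(\lambda)$), but the decisive step is left open, and the scheduling you describe does not produce the sharp threshold.

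\textbf{The scheduling and the missing per-site bound.} You release particles one at a time in \emph{decreasing} $x\cdot\v$, each running until it sleeps or exits, and then balance ``escape mass $\approx\mu F_\v(\lambda)|V_n|$'' against ``absorption capacity $\approx(1-\mu)|V_n|$''. That balance is exactly Taggi's $d\ge2$ condition $\mu>1/(1+F_\v(\lambda))$, which you correctly flag as weaker than $\mu>1-F_\v(\lambda)$; but you do not say how to close the gap. The paper's toppling strategy is quite different and closes it in one stroke. First one \emph{levels} the configuration (topples inside $V_n$ until every occupied site holds exactly one active particle). Then one processes the \emph{sites} of $V_n$ in \emph{increasing} order $x_1\cdot\v\le\cdots\le x_r\cdot\v$: at step $i$ the (at most one) particle at $x_i$ is toppled until it (i) exits $V_n$, (ii) reaches an \emph{empty} site of $A_i=\{x_{i+1},\dots,x_r\}$, or (iii) falls asleep. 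Condition~(ii) keeps $A_i$ leveled, so every subsequent step again starts with at most one particle; and because of~(ii), outcome~(iii) can only occur inside $V_n\setminus A_i\subseteq x_i+\H_\v$, hence with probability at most $1-F_\v(\lambda)$. Thus the expected number of particles \emph{left behind} is at most $|V_n|(1-F_\v(\lambda))$: the bound is per \emph{site}, not per particle, and has nothing to do with the number of initially empty sites. This immediately gives $\E_{[V_n]}[M_n]\ge\mu|V_n|-(1-F_\v(\lambda))|V_n|$, i.e.\ the sharp condition, with no separate accounting for wake-ups required (wake-ups of previously left-behind particles are simply absorbed into a final stabilization stage and can only increase $M_n$).

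\textbf{The non-fixation criterion.} You reduce non-fixation to the odometer at the origin diverging. In $d\ge2$ this forces you to control how many of the exiting particles actually pass through a fixed site, which is precisely the difficulty the paper avoids. The paper instead proves a new criterion (Proposition~\ref{pro:condition}): $\limsup_n\E_{[V_n]}[M_n]/|V_n|>0$ already implies non-fixation. Establishing this requires a \emph{particle-wise} construction of the infinite-volume dynamics (Theorem~\ref{pro:well_defined}), whose a.s.\ well-definedness is shown via a branching-process domination of influence sets, and then invoking the particle-fixation/site-fixation equivalence of Amir and Gurel-Gurevich. None of this machinery appears in your plan, and your ``odometer at the origin'' route would need a substitute for it.
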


Existence of a stochastic process corresponding to the ARW follows from general results about particle systems on non-compact state spaces~\cite{Andjel82}.
Equivalence between fixation of such process and stability of the Diaconis-Fulton site-wise representation was established in~\cite{RollaSidoravicius12}, yielding a framework suitable for the study of the fixation vs.\ non-fixation question (see Section~\ref{sec:sitewise} for a description of the site-wise representation and Section~\ref{sec:globalsitewise} for its relationship with the continuous-time particle system).
The abelian property of the site-wise representation implies that one can choose the order at which particles will move, and a \emph{stabilizing strategy} refers to a particular procedure to determine such choice.

The proof of~\cite{Taggi16} for $d=1$ consists in a stabilizing strategy where one moves the particles in a very coordinated fashion, and uses~(\ref{eq:defF}) to bound the number of particles lost on the way. It is based on a criterion from~\cite{RollaSidoravicius12}, which in particular reduces the question of non-fixation to the following condition.
For $n\in\N$, let $\PP_n$ denote the law of the ARW process whose evolution is restricted to $V_n=\{-n,\ldots,n\}^d$, that is, particles freeze outside $V_n$.
For $n\in\N$, fix $x_n\in V_n$ and denote by $K_n$ the number of visits to $x_n$.

\begin{proposition}
\label{pro:oldcondition}
If $\PP_n(K_n \geq C) \to 1$ for any $C$, then a.s.\ the system does not fixate.
\end{proposition}

In order to use a similar strategy in $d\ge 2$, \cite{Taggi16} considers ghost particles, which provide some independence. This is needed as the above criterion requires a control in probability on the number of visits to a given site.

The proof given here is a simplified version of Taggi's argument for $d=1$, and yet it works on any dimension.
This is made possible by the use of a new criterion for non-fixation.
Let $M_n$ count the number of particles that ever exit $V_n$.

\begin{proposition}
\label{pro:condition}
If
\(
\limsup_n \dfrac{\E_n M_n}{|V_n|}>0,
\)
then a.s.\ the system does not fixate.
\end{proposition}

Although the above criterion gives a condition that can be verified in terms of the site-wise representation, the proof uses the notion of \emph{particle fixation}.
We say that a given particle \emph{fixates} if it stays in passive state forever after a random finite time.
This definition makes sense for a process where each particle carries a label, so that it can be followed through time. We shall refer to this as the \emph{labeled version} of the process.
In contrast, the usual \emph{unlabeled version} of a particle system only considers the number of particles at each site without distinguishing among them.

One way to study the labeled system is to construct it explicitly.
In the \emph{particle-wise construction}, each particle carries a random path and a Poisson clock.
The evolution of such particle is given by a time-change of its own path, depending both on its Poisson clock and on the interaction with other particles.
This approach can be extremely useful for the use of arguments based on ergodicity, mass-transport principle, resampling, etc.
The following was proved in~\cite{AmirGurel-Gurevich10} using these tools.

\begin{theorem}
\label{thm:agg}
Assume that the particle-wise construction is almost surely well-defined.
Suppose that the initial condition is i.i.d.
If a particle is non-fixating with positive probability, then a.s.\ the system does not fixate.
\end{theorem}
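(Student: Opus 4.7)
The plan begins with an ergodicity reduction. Since the i.i.d.\ initial condition is ergodic under $\Z^d$-translations and the particle-wise construction is translation-equivariant, the event ``some particle is non-fixating'' lies in the shift-invariant $\sigma$-algebra and so has probability $0$ or $1$, and the probability that a particle starting at site $x$ is non-fixating does not depend on $x$. Hence it suffices to show that if a particle at the origin is non-fixating with positive probability, then the system has positive probability of not fixating; applying the zero-one law to the shift-invariant event ``the system fixates'' then upgrades this to almost sure non-fixation.

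The central tool is the mass transport principle on $\Z^d$: for any non-negative random field $f(x,y)$ whose joint law is invariant under diagonal translations,
\[
  \EE\Big[\sum_{y\in\Z^d}f(0,y)\Big] \;=\; \EE\Big[\sum_{x\in\Z^d}f(x,0)\Big].
\]
I apply this to $f(x,y)=\#\{\text{jumps by particles initially at }x\text{ that take place at }y\}$. The left-hand side is the expected total number of jumps by origin-particles, while the right-hand side is the expected odometer $\EE[u(0)]$, where $u(x)$ counts the total jumps at site $x$ throughout the evolution. A non-fixating particle is active infinitely often, and every active phase is an exponentially distributed waiting time during which the Poisson jump clock has positive probability of ringing; hence such a particle makes infinitely many jumps almost surely. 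Positive probability of a non-fixating particle at the origin therefore makes the left-hand side equal to $+\infty$ on a set of positive probability, forcing $\EE[u(0)]=\infty$. Observe that $u(x)=\infty$ is precisely the failure of site $x$ to fixate, since infinitely many jumps at $x$ force $\eta_t(x)$ to change infinitely often.

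The last and delicate step is to upgrade $\EE[u(0)]=\infty$ to $\PP(u(0)=\infty)>0$. Once this is done, translation invariance and the Birkhoff ergodic theorem yield a positive density of sites with $u(x)=\infty$ a.s., hence almost sure non-fixation of the system. This upgrade is the main obstacle: an a.s.\ finite non-negative random variable can perfectly well have infinite expectation, so the passage from infinite mean to positive probability of an infinite value cannot be performed by general measure-theoretic reasoning and must exploit model-specific features—the abelian-like structure and the monotonicity in the initial condition of the ARW dynamics—for instance by coupling with finite-volume restrictions whose odometers admit explicit tail control. This is the heart of the argument in~\cite{amir-gurelgurevich-10}; the earlier ergodicity reduction, the mass transport identity, and the observation that non-fixating particles jump infinitely often are all essentially automatic.
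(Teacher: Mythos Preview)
The paper does not prove Theorem~\ref{thm:agg}; it is quoted from \cite{amir-gurelgurevich-10} and used as a black box. There is therefore no ``paper's own proof'' to compare your attempt against, and I evaluate your proposal on its merits.

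What you have written is not a proof but a sketch that locates the difficulty without resolving it. The ergodicity reduction and the mass-transport identity are set up correctly, and you are right that a non-fixating particle jumps infinitely often. But your final paragraph explicitly concedes that the passage from $\E[u(0)]=\infty$ to $\PP(u(0)=\infty)>0$ ``cannot be performed by general measure-theoretic reasoning'' and then defers it to the very reference whose theorem you are meant to be proving. That is the entire content of the theorem: the preliminary steps you do carry out are, as you yourself say, ``essentially automatic.'' An a.s.\ finite nonnegative random variable can perfectly well have infinite mean, so this step is a genuine gap, not a technicality one can wave away.

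A side remark on the route you chose: pushing the argument through the site odometer $u(0)$ may not be the most natural framing. A slightly cleaner mass transport uses $f(x,y)=\#\{\text{non-fixating particles from }x\text{ that ever visit }y\}$. A non-fixating particle with finite range would visit some site infinitely often, already contradicting site fixation; hence under site fixation every non-fixating particle has infinite range, giving $\sum_y f(o,y)=\infty$ on the event that such a particle exists at $o$. Mass transport then yields $\E[\#\text{ non-fixating particles visiting }o]=\infty$, while site fixation plus local finiteness of the particle-wise construction makes the total number of particles ever visiting $o$ a.s.\ finite. You still face the same infinite-mean-versus-a.s.-finite obstruction, but the relevant quantity is now a particle count rather than a jump count, which connects more directly to the monotonicity and coupling tools available for ARW. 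In any case, to turn your outline into a proof you must actually supply that last argument rather than cite it.
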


To prove Proposition~\ref{pro:condition}, we use a non-abelian variation of the site-wise construction to obtain a sequence of finite approximations to the labeled process and show that \( \limsup_n \frac{\E_n M_n}{|V_n|}>0 \) implies positive probability for a labeled particle not to fixate.
Now in order to apply Theorem~\ref{thm:agg}, we need to prove that the labeled version of the ARW model can be constructed from this collection of random paths and Poisson clocks in a translation-covariant way, via an argument of almost-sure convergence.
This is the content of the next result, whose precise meaning is given in Section~\ref{sec:particlewise}.

\begin{theorem}\label{thm:well_defined_intro}
The particle-wise construction is almost surely well-defined.
\end{theorem}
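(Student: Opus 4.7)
Each particle $i$ receives, independently, an initial position $x_i$, an i.i.d.\ sequence of increments with law $p(\cdot)$ whose partial sums give a \emph{free} random walk $Y_i$, and two Poisson processes of rates $1$ and $\lambda$ (its jump and sleep attempts). In any ARW-compatible dynamics, particle $i$ consumes its clocks only when active, so its actual trajectory up to time $t$ is contained in the trace $\{Y_i(k): k\le N^{\mathrm{j}}_i(t)\}$ of its free walk. For each $n$, let $\eta^{(n)}$ be the process in which every particle starting outside $V_n$ is permanently frozen. This $\eta^{(n)}$ is well-defined event by event, because only the a.s.\ finite initial population of $V_n$ can move and their combined clocks ring locally finitely often. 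The plan is to realise the infinite-volume particle-wise construction as the a.s.\ limit of the processes $\eta^{(n)}$.

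\textbf{Consistency via a trace cluster.} Well-definedness of the limit amounts to showing that, for each $m\in\N$ and each $T>0$, there exists a random $n_0=n_0(m,T)<\oo$ a.s.\ such that $\eta^{(n)}$ coincides with $\eta^{(n+1)}$ on $V_m\times[0,T]$ for every $n\ge n_0$. A discrepancy between $\eta^{(n)}$ and $\eta^{(n+1)}$ forces a particle starting in $V_{n+1}\setminus V_n$ to affect $V_m$ before time $T$, possibly through a cascade of reactivations. By the above trace containment this requires a chain $i_0,\ldots,i_k$ of particles with $x_{i_k}\notin V_n$, the trace of $Y_{i_0}$ up to time $T$ meeting $V_m$, and consecutive such traces meeting each other. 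Letting $C(m,T)$ be the $V_m$-cluster in this random hypergraph of trace intersections --- a functional only of the initial positions, free walks, and jump clocks, independent of all sleep events and actual dynamics --- the claim reduces to showing $C(m,T)\subset V_{n_0}$ a.s.\ for some a.s.\ finite $n_0$.

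\textbf{Main estimate and principal obstacle.} Under the standard moment assumptions on $p$, a rate-$1$ random walk satisfies $\PP(\sup_{s\le T}|Y(s)|\ge r)\le e^{-cr}$ for $r$ large enough relative to $T$. Coarse-graining $\Z^d$ into boxes of side $L$ proportional to a typical free-walk displacement on $[0,T]$ converts the estimate on $|C(m,T)|$ into a subcritical dependent Bernoulli percolation on boxes, where a box is deemed occupied if some free walk starting in a neighbouring box produces a trace passing through it. Combining this percolation bound with the i.i.d.\ mean-$\mu$ initial cloud and summing first-moment estimates over chain lengths would then yield an exponentially small probability that $C(m,T)$ reaches distance $n$ from $V_m$, and Borel--Cantelli would deliver $n_0(m,T)$. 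The principal technical obstacle, absent from the single-walk picture, is precisely ruling out long relay chains of reactivations: it amounts to verifying that the trace-intersection Boolean model is subcritical on every finite time horizon $T$, even though, of course, it becomes supercritical as $T\to\infty$. Depending on the formulation chosen in Section~\ref{sec:particlewise}, an attractive alternative is to first construct the site-wise ARW dynamics via the Abelian property and then carry labels along the free paths, reducing labelled well-definedness to the already-known unlabelled one.
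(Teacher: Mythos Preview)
Your outline correctly identifies that one must show the finite-volume approximations stabilize on every bounded space-time window, and the containment of actual trajectories in free-walk traces is valid. But the argument has a genuine gap precisely where you name the ``principal technical obstacle'': the trace-intersection Boolean model is \emph{not} subcritical for every finite $T$. The theorem must cover arbitrary $\sup_x\E[\eta_0(x)]<\infty$, and for large $\mu$ the expected number of other traces meeting a given one exceeds $1$ even for moderate $T$, so the cluster $C(m,T)$ is infinite with positive probability and the Borel--Cantelli step cannot be carried out. (Already if $\PP(\eta_0(o)\ge2)>0$, particles sharing a starting site have intersecting traces; for $\mu$ large the resulting percolation is supercritical at any $T>0$.) Trace intersection is necessary for influence to propagate, but this bound overcounts by a factor that grows with $\mu$, and no coarse-graining repairs that. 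The alternative you sketch---constructing the site-wise process first and then threading labels through it---does not work either, since the Diaconis--Fulton representation deliberately forgets which particle uses which instruction, and there is no canonical way to recover labels from it.

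The paper circumvents the density obstruction by bounding not the set of particles whose traces \emph{could} interact, but the set $Z^{x,i}_t(\pi)$ of sites actually \emph{influenced} by removing a single particle $(x,i)$ from an arbitrary finite background $\pi$. The crucial observation is that influence spreads only via reactivation of passive particles, and each site holds at most one such particle. Introducing auxiliary states I (influenced) and J (trail), one dominates $Z^{x,i}_t(\pi)$ stochastically by $x+\Zt_t$, where $\Zt_t$ is a branching set in which each particle produces at most two offspring at rate at most $1+\lambda$, giving $\E|\Zt_t|\le e^{2(1+\lambda)t}$ \emph{uniformly in $\pi$, hence in $\mu$}. A first-moment sum over the sites being added then yields
\[
\sum_{x}\E[\eta_0(x)]\,\PP(z\in x+\Zt_T)\le \sup_x\E[\eta_0(x)]\cdot\E|\Zt_T|<\infty,
\]
and Borel--Cantelli concludes. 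The branching bound, not a percolation bound, is what makes the argument uniform in $\mu$ and also avoids any tail assumption on $p(\cdot)$, which your displacement estimate $\PP(\sup_{s\le T}|Y(s)|\ge r)\le e^{-cr}$ would additionally require.
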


Statements and proofs are given in $\Z^d$ for simplicity.
The proof of Theorem~\ref{thm:well_defined_intro} works in the setting of transitive unimodular graphs, and the proof of Proposition~\ref{pro:condition} works on the same setting with the further assumption of amenability.
See Section~\ref{sec:extensions} for details.

The next section exposes the site-wise representation and the local site-wise construction.
Assuming the non-fixation criterion, i.e.~Proposition~\ref{pro:condition}, this paves the way to proving Theorem~\ref{thm:main} in Section~\ref{sec:proof_activity}. In the aim of proving this criterion in Section~\ref{sec:nonfixation}, we first expose the formal definition of the particle-wise construction in Section~\ref{sec:particlewise} and state its well-definedness, whose proof is deferred to Section~\ref{sec:proof_existence}. Finally, Section~\ref{sec:globalsitewise} presents the global site-wise construction and Section~\ref{sec:extensions} discusses the extensions of the results to more general graphs.

\section{Notation and site-wise representation}
\label{sec:sitewise}

In this section we briefly describe the site-wise representation and state its abelian property.
This property will be used when proving Theorem~\ref{thm:main} with the aid of Proposition~\ref{pro:condition}.
Properties used in the proof of Proposition~\ref{pro:condition} itself will be discussed during the proof.
Denote $\N=\{1,2,3,\ldots\}$ and $\N_0=\{0,1,2,3,\ldots\}$, and $o$ as the origin in $\Z^d$.

Let $\eta\in(\N_{0\varrho})^{\Z^d}$ denote a configuration of particles.
Here
\(\N_{0\varrho}=\{0,\varrho,1,2,3,\ldots\},\)
and the symbol $\varrho$ corresponds to the presence of one passive particle (there may be at most one), while $1$ represents one active particle.

Let $\Ir^x=(\Ir^{x,k})_{k\in\N}$ denote the sequence of instructions assigned to each site $x$, where $\Ir^{x,k}\in\Z^d\cup\{\varrho\}$.
The \emph{toppling operation} consists in performing the action indicated by the first unused instruction $\iota$ in the sequence $\Ir^x$ assigned to $x$, which is to make a particle present at that site $x$ jump to $x+y$ (and activate a possibly present passive particle there) when $\iota=y\in\Z^d$, or try to fall asleep (succeeding only if the particle is alone) when $\iota=\varrho$.
In other words, the system goes through the transition $\eta\to\tau^{x,\iota}\eta$ where
\[
\tau^{x,y}\eta=\eta-\delta_x+\delta_{x+y},\qquad\text{and}\qquad \tau^{x,\varrho}\eta=\begin{cases}\eta-\delta_x+\varrho\delta_x&\text{if $\eta(x)=1$,}\\
\eta&\text{otherwise,}\end{cases}
\]
with the convention that $\varrho+1=2$ and $1-1+\varrho=\varrho$. 
In the sequel we discuss some combinatorial properties of this operation, in a deterministic setting.

\subsection{Topplings and stability}

Assume that $(\Ir^{x,k})_{k\in\N,x\in\Z^d}$ is fixed.
Let $\eta\in(\N_{0\varrho})^{\Z^d}$ and $h\in(\N_0)^{\Z^d}$. Here, $\eta$ can be thought of as a configuration reached by an ARW and $h$ as the number of instructions that have already been executed at each site to get to $\eta$.

A site~$x$ is \emph{unstable} in the configuration~$\eta$ if~$x$ contains active particles in that configuration, and \emph{stable} otherwise.
The \emph{toppling} of an unstable site $x$ is an operation on $\eta$ and $h$, denoted by $\Phi_x$, which changes the configuration $\eta$ according to the next unused instruction at $x$, and increases the counter $h$ by~$1$ at $x$: 
\[
\Phi_x(\eta,h)=(\tau^{x,\Ir^{x,h(x)+1}}\eta,h+\delta_x)
.
\]
We write $\Phi_x\eta$ for $\Phi_x(\eta,0)$. 
Toppling a site is \emph{legal} for $\eta$ if the site is unstable in $\eta$.

Let $\alpha=(x_1,\dots,x_n)$ denote a finite sequence of sites in~$\Z^d$, which we think of as the order in which a sequence of topplings will be applied.
The sequence $\alpha$ is said to be \emph{legal} for $\eta$ if each subsequent toppling involved in $\Phi_{x_n}(\cdots(\Phi_{x_1}\eta))$ is legal, in which case $\Phi_\alpha\eta$ can be defined, where  $\Phi_\alpha=\Phi_{x_n}\cdots\Phi_{x_1}$.
Let~$V$ denote a finite subset of~$\Z^d$. A configuration $\eta$ is said to be \emph{stable in~$V$} if all the sites $x\in V$ are stable in $\eta$.
We write {$\alpha \subset V$} if all elements of $\alpha$ are in~$V$.
We say that \emph{$\alpha$ stabilizes $\eta$ in $V$} if the configuration in $\Phi_\alpha \eta$ is stable in $V$.
Let $m_\alpha \in \N_0^{\Z^d}$ count the number of times each site appears in $\alpha$.

\begin{lemma*}[Local Abelianness]\label{lem:local_abelian}
If $\alpha$ and $\beta$ are legal sequences of topplings for $\eta$ such that $m_\alpha = m_\beta$, then $\Phi_\alpha\eta = \Phi_\beta\eta$.
\end{lemma*}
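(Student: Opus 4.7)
The plan is to prove the abelian property by induction on the common length $n=|\alpha|=|\beta|$, reducing the general case to a local commutation (diamond) lemma for two topplings at distinct unstable sites.

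First I would establish the diamond lemma: if $x\neq y$ are both unstable in a configuration $\zeta$, then both orders $\Phi_x\Phi_y$ and $\Phi_y\Phi_x$ are legal and $\Phi_x\Phi_y\zeta=\Phi_y\Phi_x\zeta$. The proof is a case analysis on the two next instructions at $x$ and $y$ (jump versus sleep $\varrho$). Since each instruction modifies only its own site and, in the jump case, one neighbor, and since a sleep succeeds only when the toppled site holds exactly one active particle, the only delicate case is when one instruction is a jump from $y$ landing in $x$ while the other is a sleep attempt at $x$: a direct check shows that either $\zeta(x)=1$, in which case the transient sleep is immediately undone by the incoming particle, or $\zeta(x)\geq 2$, in which case the sleep fails in either order; both orders yield the same configuration.

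Next I would set up the induction on $n$; the base case $n=0$ is trivial. For the inductive step, write $\alpha=(x_1,\dots,x_n)$ and $\beta=(y_1,\dots,y_n)$. Since $m_\alpha=m_\beta$, the site $x_1$ appears in $\beta$; let $k$ be the least index with $y_k=x_1$. The crux is to show that $x_1$ can be moved to the front of $\beta$: namely, that $\beta'=(x_1,y_1,\dots,y_{k-1},y_{k+1},\dots,y_n)$ is a legal sequence for $\eta$ and satisfies $\Phi_{\beta'}\eta=\Phi_\beta\eta$. Granted this, removing the initial $x_1$ from $\alpha$ and from $\beta'$ leaves two legal sequences for $\Phi_{x_1}\eta$ of length $n-1$ with the same toppling count, and the induction hypothesis gives $\Phi_\alpha\eta=\Phi_{\beta'}\eta=\Phi_\beta\eta$.

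To obtain $\beta'$, I would swap $x_1$ with $y_{k-1},y_{k-2},\dots,y_1$ in turn, applying the diamond lemma at each step. The main subtlety, and where I expect the real work to lie, is verifying that the two sites involved in each swap are simultaneously unstable. Legality of $\beta$ takes care of each $y_j$ at its moment. For $x_1$, one invokes a monotonicity specific to this dynamics: since $y_j\neq x_1$ for $j<k$ by minimality of $k$, toppling $y_j$ can only increase the active-particle count at $x_1$ (via a jump into $x_1$, which also wakes any passive particle there) or leave it unchanged, because sleep instructions affect only the toppled site itself. Combined with the fact that $x_1$ is unstable in the initial configuration $\eta$ (as $\alpha$ is legal with first site $x_1$), this forces $x_1$ to remain unstable at every intermediate configuration $\Phi_{(y_1,\dots,y_j)}\eta$ for $j=0,1,\dots,k-1$, which is precisely what is needed to justify each application of the diamond lemma.
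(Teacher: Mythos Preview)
Your argument is correct and follows the standard route (the diamond/adjacent-transposition lemma plus induction on the common length). The paper itself does not supply a proof of this lemma: it states Local and Global Abelianness and refers the reader to~\cite{rolla-15} for details, so there is no ``paper's proof'' to compare against beyond noting that your approach is exactly the classical one used in that reference.

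Two small remarks. First, when you write that a jump instruction modifies ``one neighbor'', bear in mind that $p(\cdot)$ is an arbitrary jump distribution on $\Z^d$, so the target need not be a graph-neighbor; nothing in your argument actually uses nearest-neighbor structure, so this is only a wording issue. Second, for the diamond lemma you should also record explicitly that if $x\neq y$ are both unstable in $\zeta$ then $y$ remains unstable in $\Phi_x\zeta$ (toppling at $x$ can only add a particle at $y$ or leave it unchanged, never deactivate it); this is what makes ``both orders are legal'' true and is what you implicitly use when you chain the swaps to conclude that the reordered sequence $\beta'$ is legal all the way through its tail.
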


\begin{lemma*}[Global Abelianness]\label{lem:global_abelian}
If $\alpha$ and $\beta$ are both legal toppling sequences for $\eta$ that are contained in $V$ and stabilize $\eta$ in $V$, then $m_\alpha=m_\beta$.
In particular, $\Phi_\alpha\eta=\Phi_\beta\eta$.
\end{lemma*}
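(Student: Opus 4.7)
My plan is to deduce Global Abelianness from Local Abelianness via a \emph{least action principle}: for any legal toppling sequence $\alpha\subseteq V$ for $\eta$ and any legal stabilizing sequence $\beta\subseteq V$ for $\eta$, one has $m_\alpha\le m_\beta$ coordinatewise. Granted this principle, if both $\alpha$ and $\beta$ are legal and stabilize $\eta$ in $V$, applying it in both directions gives $m_\alpha=m_\beta$, after which Local Abelianness yields $\Phi_\alpha\eta=\Phi_\beta\eta$, which is the content of the lemma.

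I would prove the least action principle by induction on $|\alpha|$. The base case $|\alpha|=0$ is trivial. For the inductive step, write $\alpha=\alpha'\cdot(x_n)$; by the inductive hypothesis $m_{\alpha'}\le m_\beta$, so it suffices to establish $m_{\alpha'}(x_n)<m_\beta(x_n)$. Suppose for contradiction that $m_{\alpha'}(x_n)=m_\beta(x_n)$. The crucial step is to construct a legal sequence $\gamma$ for the configuration $\Phi_{\alpha'}\eta$, contained in $V$, avoiding $x_n$, and with $m_{\alpha'\cdot\gamma}=m_\beta-m_{\alpha'}$. Given such a $\gamma$, the concatenation $\alpha'\cdot\gamma$ is a legal sequence for $\eta$ with $m_{\alpha'\cdot\gamma}=m_\beta$, so Local Abelianness applied to $\alpha'\cdot\gamma$ and $\beta$ gives $\Phi_{\alpha'\cdot\gamma}\eta=\Phi_\beta\eta$, which is stable in $V$. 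However, $x_n$ is unstable in $\Phi_{\alpha'}\eta$ (by the legality of $\alpha$), and as $\gamma$ never topples $x_n$, no instruction executed along $\gamma$ can remove a particle from $x_n$ or turn an active particle there into a passive one. Hence $x_n$ would still be unstable in $\Phi_{\alpha'\cdot\gamma}\eta$, contradicting stability.

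The sequence $\gamma$ I would build by scanning $\beta=(y_1,\ldots,y_m)$ from left to right, maintaining a running toppling count initialized at $m_{\alpha'}$: at step $i$, if the current count at $y_i$ is still strictly below $m_\beta(y_i)$ we append $y_i$ to $\gamma$ and increment, otherwise we skip it. The resulting $\gamma$ has the right toppling count; in particular, it never contains $x_n$ because the running count at $x_n$ starts from $m_{\alpha'}(x_n)=m_\beta(x_n)$ and thus is already saturated. The delicate point is that $\gamma$ is legal, for which I would establish by an induction along the steps of $\beta$ the following comparison: writing $\beta^{(i)}$ and $\gamma^{(i)}$ for the respective prefixes built at step $i$, at every site $z$ where $m_{\alpha'\cdot\gamma^{(i)}}(z)=m_{\beta^{(i)}}(z)$ the two runs have consumed the same instructions at $z$, hence the local state at $z$ after $\alpha'\cdot\gamma^{(i)}$ is at least as unstable as after $\beta^{(i)}$, the extra topplings elsewhere only adding particles at $z$. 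At the moment $y_i$ is appended, the skipping rule ensures $y_i$ falls into this case and is unstable in $\Phi_{\beta^{(i-1)}}\eta$, so it is unstable in $\Phi_{\alpha'\cdot\gamma^{(i-1)}}\eta$ as well, establishing legality.

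The main obstacle is precisely this simultaneous monotonicity argument between the $\beta$-run and the $(\alpha'\cdot\gamma)$-run. It requires a case analysis on the type of instruction (jump versus sleep attempt) and on whether the target site is empty, has a passive particle, or has active particles, to verify that discrepancies in toppling counts translate monotonically to discrepancies in the configurations at the relevant sites. All the remaining steps are bookkeeping once this comparison is in place.
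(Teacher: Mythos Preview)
The paper does not prove this lemma; it is stated with a reference to~\cite{rolla-15}. Your overall plan (least action principle, then symmetry) is the standard one, but your implementation has a genuine gap in the legality argument for $\gamma$. You claim that ``at the moment $y_i$ is appended, the skipping rule ensures $y_i$ falls into this case'', i.e., that $m_{\alpha'\cdot\gamma^{(i-1)}}(y_i)=m_{\beta^{(i-1)}}(y_i)$. This is false whenever $m_{\alpha'}(y_i)>0$: since you have not yet skipped $y_i$, you have appended it every time it occurred in $\beta^{(i-1)}$, so the running count at $y_i$ equals $m_{\alpha'}(y_i)+m_{\beta^{(i-1)}}(y_i)$, which is \emph{strictly larger} than $m_{\beta^{(i-1)}}(y_i)$. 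Your invariant therefore says nothing about $y_i$ at that moment, and indeed the scanning construction can produce an illegal $\gamma$. Concretely, take $V=\{a,b\}$, $\eta=(1,0)$, instructions $\mathcal I^{a,1}=$ ``jump to $b$'', $\mathcal I^{a,2}=$ ``jump outside $V$'', $\mathcal I^{b,1}=$ ``jump to $a$''. Then $\alpha'=(a)$ is legal with $\Phi_{\alpha'}\eta=(0,1)$ and $m_{\alpha'}=(1,0)$, while $\beta=(a,b,a)$ is legal and stabilizes $V$ with $m_\beta=(2,1)$. Your scan of $\beta$ appends $y_1=a$ first (count $1<2$), but $a$ is empty in $\Phi_{\alpha'}\eta$, so this toppling is illegal. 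A legal completion with odometer $m_\beta-m_{\alpha'}=(1,1)$ does exist, namely $(b,a)$, but your specific ordering fails.

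The clean fix, which is also the standard proof, is to peel off the \emph{first} element of $\alpha$ rather than the last. Writing $\alpha=(x_1)\cdot\alpha''$, one shows $m_\beta(x_1)\ge 1$ (topplings elsewhere cannot stabilize $x_1$), then moves the first occurrence of $x_1$ in $\beta$ to the front to obtain a legal $\tilde\beta=(x_1)\cdot\beta''$ with $m_{\tilde\beta}=m_\beta$; the legality of this single reordering is exactly the ``extra topplings only add particles'' monotonicity you invoke, but now applied to \emph{one} extra toppling at a \emph{fixed} site rather than a whole sequence. After cancelling the common first step, the inductive hypothesis applied to $\alpha''$ and $\beta''$ (both legal for $\Phi_{x_1}\eta$, the latter stabilizing) gives $m_{\alpha''}\le m_{\beta''}$, hence $m_\alpha\le m_\beta$. (Incidentally, your displayed condition ``$m_{\alpha'\cdot\gamma}=m_\beta-m_{\alpha'}$'' should read $m_\gamma=m_\beta-m_{\alpha'}$.)
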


For the proofs, see e.g.~\cite{RollaSidoravicius12}.

\subsection{Site-wise construction of finite systems}
\label{sec:sitewisefinite}

We now describe the site-wise construction of a continuous-time stochastic process $(\eta_t)_{t \ge 0}$ corresponding to a system that contains finitely many particles.
The variable $\eta_t(x) \in \N_{0\varrho}$ represents the number and type of particles at the site $x\in\Z^d$ at time $t\ge0$.

Let the sleep rate $\lambda>0$ and the jump distribution $p(\cdot)$ on $\Z^d$ with $p(o)\neq1$ be given.
For each site $x$, sample a càdlàg process $t \mapsto N^x(t)\in \N_0$ that starts at $N^x(0)=0$ and jumps by $+1$ at rate given by $1+\lambda$.
Sample an i.i.d.\ sequence $\Ir^x$ with
\[
\Ir^{x,k}=
\bigg\{
\begin{array}{cl}
y & \text{with probability $\frac1{1+\lambda}p(y)$},\\
\varrho & \text{with probability $\frac\lambda{1+\lambda}$.}
\end{array}
\]
Sample the initial condition $\eta_0$ according to a prescribed probability distribution.

To define the continuous-time evolution $(\eta_t)_{t \ge 0}$, which will be piecewise constant and right continuous, let us first associate to it a local time process $(L^x_t)_{x\in\Z^d,\,t\ge0}$.
For each $x\in\Z^d$, $(L^x_t)_{t\ge0}$ starts at $0$, it is continuous and piecewise linear with slope given by the number $\eta_t(x)\I_{\{\eta_t(x)\ge1\}}$ of active particles at $x$.
For each $t$ and $x$, we define $h_t(x)=N^x(L^x_t)$.
Note that $h_0=0$ and, unless all particles are passive, there are some sites $x$ for which $L^x$ is increasing at time $0^+$. The process $\eta$ is kept constant equal to $\eta_0$ until the first time $t_1$ when $h$ jumps, which has to be to $\delta_{x_1}$ for some $x_1$. At this time, we let $\eta$ jump to $\eta_{t_1}=\tau^{\Ir^{x_1,1}}\eta_{0}$. The process $\eta$ is then kept constant until the next jump of $h$, which occurs at a time $t_2$ and to $h_{t_1}+\delta_{x_2}$ for some $x_2$. We let at this time $\eta$ jump to $\eta_{t_2}=\tau^{\Ir^{x_2,h_t(x_2)}}\eta_{t_1}$, and so on. 
This procedure can be continued until a stable configuration is reached.
Notice that we have $(\eta_t,h_t)=\Phi_{x_1,\dots,x_k}(\eta_0,h_0)$ for $t\in[t_k,t_{k+1})$.

Given a probability distribution on $(\N_{0\varrho})^{\Z^d}$ and finite $V \subseteq \Z^d$, let $\PP_{[V]}$ denote the law of $(\eta_0,\Ir,N)$ if $\Ir$ and $N$ are sampled as above, and $\eta_0$ is first sampled according to such distribution and then $\eta_0(x)$ is replaced by $-\infty$ for $x \not\in V$.
In words, $\PP_{[V]}$ corresponds to a dynamics where particles are erased as soon as they leave $V$.
For $\PP_n$ and $V_n$ defined in the Introduction, we have $\PP_n = \PP_{[V_n]}$.

By the abelian property, the stable configuration eventually reached by $(\eta_t)_{t\geq 0}$ depends on $\eta_0$ and $\Ir$, not on the Poisson clocks $N^x$.
This will be crucial in the next section.

\section{Non-fixation for biased ARW}
\label{sec:proof_activity}

In this section we prove Theorem~\ref{thm:main}.
Let us start with a preliminary observation.
Consider a discrete-time random walk with jump distribution $p$ starting at $o$.
Let a particle follow this random walk, but for each time the random walk is about to jump from a site in $\H_\v$ (there are $\ell_{\H_\v}$ such times), toss a coin and destroy the particle with probability $\frac{1}{1+\lambda}$.
The number $F_\v(\lambda)$ defined in~(\ref{eq:defF}) is the probability that the particle survives forever.

Assume $\mu>1-F_\v(\lambda)$.
By Proposition~\ref{pro:condition}, we have to show that
\begin{equation}
\label{eq:condition}
\limsup_n \frac{\E_{n} M_n}{|V_n|}>0.
\end{equation}
By the discussion in \S\ref{sec:sitewisefinite}, we may use the explicit site-wise construction of $\PP_n$ and in particular its abelian property.
The proof below consists in describing a toppling strategy that allows to show that, on average, sufficiently many particles (a positive density of them) quit $V_n$.

Let us choose a labeling $V_n=\{x_1,\ldots,x_r\}$, where $r=|V_n|$ and
\(x_1\cdot\v\le\cdots\le x_r\cdot\v\).
For $i=0,\ldots,r$, let \(A_i=\{x_{i+1},\ldots,x_r\}\subset V_n.\)

\textbf{Stage 1: leveling.}
We first make legal topplings inside $V_n$ until reaching a \emph{leveled configuration}, that is, a configuration where each site in $V_n$ contains at most one particle, and all particles inside $V_n$ are active.
This can be achieved by repeating the following procedure: locate the sites in $V_n$ containing at least two particles, and topple each of them once.
Since the number of particles in $V_n$ cannot increase during this procedure, a finite number of states can be reached from the initial one.
Therefore, repeating this procedure leads a.s.\ to a leveled state, as required.

\textbf{Stage 2: rolling.}
This stage subdivides into $r$ successive steps.
For $i=1,\ldots,r$, Step~$i$ goes as follows: if there is a particle at $x_i$ then we do a sequence of legal topplings, starting by a toppling at $x_i$, then at the location where this toppling pushed the particle to, and so on until this particle either (i) exits $V_n$, (ii) reaches an empty site in $A_{i}$, or (iii) falls asleep in $V_n$.
If the latter happens, we declare that the particle is \emph{left behind}.
Because of the stopping condition~(ii), condition~(iii) can only happen at $V_n\setminus A_{i}\subset x_i+\H_\v$.

By induction we have that, for $i=1,\ldots,r$, after Step $i$, the configuration is leveled on $A_i$.
In particular, at the start of Step $i$, there is at most one particle at $x_i$.
Therefore, at most one particle is left behind at each step, which happens with probability bounded from above by $1-F_\v(\lambda)$.

\textbf{Stage 3: finishing.}
Some particles in the previous stage may have activated particles previously left behind, so the final state may not be stable.
In this last stage we just stabilize the configuration inside $V_n$ by whatever sequence of topplings.

\textbf{Density of quitters.}
During Stage~2, each particle either ends up exiting $V_n$ or being left behind (if a given step ends due to condition (ii), the corresponding particle will be moved again in a later step).
Let $N_n$ denote the number of particles left behind during Stage~2.
Notice that $M_n \ge \sum_{x\in V_n} \eta_0(x) - N_n$.
Therefore,
$\E_n[M_n] \ge |V_n|\, \mu - |V_n|\,(1-F_\v(\lambda))$, completing the proof.

\section{Particle-wise construction}
\label{sec:particlewise}

Consider a representation of the ARW process where each particle is individually labeled, so that it can be followed through time, similarly to the informal description at the Introduction.
Such a labeled process determines the unlabeled evolution $(\eta_t)_{t \ge 0}$, by simply counting particles present at each site, but also contains significantly more information.

Given an initial condition $\eta_0$, each particle is assigned a label $(x,i)$ where $x\in \Z^d$ is the initial location and $i\in\{1,\ldots,\eta_0(x)\}$ distinguishes between particles starting at the same site.
The randomness will not be attached to the sites as in the previous sections, but to the particles instead.

\paragraph{Particle-wise randomness.}

Assign to each particle $(x,i)\in \Z^d\times\N$ a continuous-time walk $X^{x,i}=(X^{x,i}_t)_{t \geqslant 0}$ with jump rate 1 and jump distribution $p(\cdot)$, independently of anything else, as well as a Poisson clock $\PPP^{x,i}\subset \R_+$ according to which the particle will try to sleep.
$X^{x,i}$ will be the path of the particle parameterized by its \emph{inner time}, which is meant to halt when the particle becomes deactivated, and to resume at reactivation.
For this reason, $X^{x,i}$ will be called the \emph{putative trajectory} of particle $(x,i)$.
These elements, together with the initial configuration $\eta_0$, which can also be random, will be denoted by $\xib=(\eta_0,\X,\Pb)$.

\paragraph{Case of finite configurations.}

Given $\xib=(\eta_0,\X,\Pb)$ where $\eta_0$ contains finitely many particles (i.e., $\sum_x\eta_0(x)<\infty$), the particle-wise ARW is a simple continuous-time Markov chain on a countable state-space, and it is straightforward to define it explicitly using the trajectories $\X$ and clocks $\Pb$.
Let us denote this process by
\[(\zeta^{x,i}_t\pv x\in \Z^d,\,i\in\N)_{t\ge0}=(Y^{x,i}_t,\gamma^{x,i}_t\pv x\in \Z^d,\,i\in\N)_{t\ge0},\]
where $Y^{x,i}_t\in \Z^d\cup\{\Upsilon\}$ stands for the position of particle $(x,i)$ if $i\le \eta_0$ and $Y^{x,i}=\Upsilon$ if $i>\eta_0(x)$, and $\gamma^{x,i}_t\in\{1,\varrho\}$ stands for its state.

The pair $\zetab=(\Y,\boldsymbol{\gamma})$ describes the whole evolution of the system, and can be seen as a function of $\xib$ so we can write $\zetab=\zetab(\xib)=\zetab(\eta_0,\X,\Pb)$.

\paragraph{Extension to the infinite system.} We are now interested in defining $\zetab(\eta_0,\X,\Pb)$ when $\eta_0$ contains infinitely many particles.

Let us introduce a notation for the ``fully-labeled occupation measure'' of $\zetab$.
Assuming that $\eta_0$ has finitely many particles, for any site $z$ and time $t$, let $\etal_t(z)$ denote the set of particles present at site $z$ at time $t$, together with their states:
\[\etal_t(z)=\etal_t(z\pv\eta_0,\X,\Pb)=\{(x,i,\sigma)\in \Z^d\times\N\times\{1,\varrho\}\,:\,(Y^{x,i}_t,\gamma^{x,i}_t)=(z,\sigma)\}.\]
Thus, $\etal_t$ is a site-wise counterpart to $\zeta_t$, and knowing the former is equivalent to knowing the latter.
We write $\etalb_t=(\etal_t(z))_{z\in \Z^d}$ and $\etalb=(\etalb_t)_{t \geq 0}$.

Fix an increasing sequence $(U_n)_n$ of finite subsets of $\Z^d$ such that $U_n \uparrow \Z^d$.
For initial configurations $\eta_0$ with possibly infinitely many particles, we wish to define $\etalb(\eta_0,\X,\Pb)$ by: for all $T\ge0$, for all $z\in\Z^d$,
\begin{equation}\label{eq:definition_etal}
\etal_{|_{[0,T]}}(z\pv\eta_0,\X,\Pb)=\lim_n\ \etal_{|_{[0,T]}}(z\pv\eta_0\cdot\I_{w+U_n},\X,\Pb),
\end{equation}
where $w\in\Z^d$ would be arbitrary (and would thus give translation invariance), and the right-hand side sequence would be eventually constant in $n$. This leads us to

\begin{definition*}
Given $\xi=(\eta_0,\X,\Pb)$, the process $\etalb(\eta_0,\X,\Pb)$ is \emph{well-defined} if, for all $T\ge0$ and $z\in\Z^d$, for all $w\in\Z^d$, the sequence
\begin{equation}
	\etal_{|_{[0,T]}}(z\pv\eta_0\cdot\I_{w+U_n},\X,\Pb)\quad ;\quad n\ge0,
\nonumber
\end{equation}
is eventually constant and its limit does not depend on $w$. In this case, we define $\etalb(\eta_0,\X,\Pb)$ as in~\eqref{eq:definition_etal}.
\end{definition*}

Let us discuss a few important consequences of the above definition.

First, if $\etalb$ is well-defined, then the counting process $(\eta_t)_{t\geq 0}$, given
\[\eta_t(z\pv\eta_0,\X,\Pb)=\sum_{(x,i,\sigma)\in\etal_t(z)} \sigma,\]
has same law as in the site-wise definition.

If $\etalb$ is well-defined, then we may also define the processes $\zeta^{x,i}(\eta_0,\X,\Pb)$ for $x\in\Z^d$, $i\in\N$, by
\begin{equation}
	\zeta^{x,i}_{|_{[0,T]}}(\eta_0,\X,\Pb)=\lim_n\ \zeta^{x,i}_{|_{[0,T]}}(\eta_0\cdot\I_{w+U_n},\X,\Pb)
\end{equation}
where $w$ is arbitrary, and the right-hand side sequence is eventually constant. Indeed, for all $T>0$, for all $n$, $\zeta^{x,i}_{[0,T]}(\eta_0\cdot\I_{w+U_n},\X,\Pb)\subset X^{x,i}_{[0,T]}$, and $\etalb_{|_{[0,T]}}(\eta_0\cdot\I_{w+U_n},\X,\Pb)$ is eventually constant almost surely at all points of the finite set $X^{x,i}_{[0,T]}$, with a limit that does not depend on $w$.

By the definition, $\etalb$ is \emph{translation covariant}, i.e., if $\etalb(\xib)$ is well-defined, then $\etalb(\theta\xib)$ is well-defined and $\etalb(\theta\xib)=\theta\etalb(\xib)$, where $\theta\xib=(\theta\eta_0,\theta\X,\theta\Pb)$. As a consequence, the same holds for $\zetab$ as well.

Finally, if $\etalb$ is well-defined, then $\zetab$ is \emph{locally finite}, i.e., the number of particles $(x,i)$ visiting a given site $z$ before a given time $T$ is almost surely finite, since it is the limit of an eventually constant sequence of integers.

We are now in position to restate Theorem~\ref{thm:well_defined_intro} from the Introduction in a precise way:

\begin{theorem}\label{pro:well_defined}
Assume that $\displaystyle\sup_{x\in\Z^d}\E[\eta_0(x)]<\infty$. Then $\etalb(\eta_0,\X,\Pb)$ is a.s.\ well-defined.
\end{theorem}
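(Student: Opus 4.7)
I combine a first-moment a priori estimate with a monotonicity principle. The first estimate shows only finitely many particles have putative trajectories visiting $z$ before inner time $T$; the monotonicity shows that enlarging the initial configuration only advances each particle's inner-time clock. Together, the truncated processes $\etal_{|_{[0,T]}}(z\pv\eta_0\cdot\I_{w+U_n},\X,\Pb)$ must a.s.\ be eventually constant; translation covariance then removes the dependence on $w$.

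\textbf{Steps.} For the a priori estimate, let $N_T\sim\mathrm{Poisson}(T)$ denote the number of putative jumps by time $T$. A union bound on the discrete-time embedding together with the translation-invariance identity $\sum_x p_k(x,z)=1$ gives
\begin{equation*}
\sum_{x\in\Z^d}\PP\big[X^x\text{ visits }z\text{ by time }T\big]\le\sum_{k\ge 0}\PP[N_T\ge k]\sum_x p_k(x,z)=T+1,
\end{equation*}
so the expected number of particles whose putative trajectory reaches $z$ by inner time $T$ is at most $(T+1)\sup_x\E[\eta_0(x)]<\infty$, and this set is a.s.\ finite. For the monotonicity, I claim: if two finite configurations $\eta_0^A\le\eta_0^B$ are evolved under the common randomness $(\X,\Pb)$, then for each particle $p$ present in both, $\tau^A_p(T)\le\tau^B_p(T)$, where $\tau_p(T)$ denotes the inner time $p$ has spent active by real time $T$. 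The informal reason is that the only deactivation mechanism is a Poisson-clock ring at a lone site, and adding particles can only obstruct such rings. A careful induction on the merged sequence of putative jumps and Poisson rings should yield the inequality. Applied to $U_n\subset U_{n+1}$, this forces $\tau_p^{(n)}(T)\uparrow\tau_p^{(\infty)}(T)\le T$ for each particle $p$; combined with the finiteness of the influence set and a generic non-coincidence argument, the full càdlàg trajectory $t\mapsto\etal_t(z)$ on $[0,T]$ is a.s.\ eventually constant in $n$.

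\textbf{Main obstacle.} The monotonicity is the technical heart of the proof. It does not follow from the site-wise abelianness recalled in Section~\ref{sec:sitewise}, because the Diaconis--Fulton construction reshuffles particle labels. I envisage an induction on the filtration generated by the putative jumps and Poisson clocks, with case analysis at each event (jump, sleep-attempt ring, wake-up by arrival). The delicate case is when a particle in system $B$ is woken by an arrival that has no counterpart in system $A$: the subsequent event schedules then desynchronize, yet the pointwise inequality $\tau^A_p\le\tau^B_p$ must be maintained for every common $p$. A secondary minor point is the a.s.\ avoidance of the coincidence $\tau_p^{(\infty)}(T)=s$ with a putative visit time $s$ of $X^p$ to $z$ in the stabilization step, which follows from the absolute continuity of the joint law of the independent Poisson clocks.
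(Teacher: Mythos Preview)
Your approach has a genuine gap: the monotonicity claim $\tau^A_p(T)\le\tau^B_p(T)$ for $\eta_0^A\le\eta_0^B$ is \emph{false} in the particle-wise construction. The intuition ``adding particles can only obstruct sleep'' breaks down once chain reactions are allowed. Here is a three-particle counterexample. Take $p$ at site $0$, $q$ at site $1$ in both systems, and an extra particle $r$ at site $1$ in $B$ only. Let $q$'s sleep clock ring at inner time $0.01$ and $q$'s putative path jump to site $3$ at inner time $0.1$; let $r$ jump to site $10$ at inner time $0.2$; let $p$ jump to site $1$ at inner time $0.5$ and $p$'s sleep clock ring at inner time $0.55$. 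In system $A$, $q$ falls asleep at real time $0.01$; $p$ arrives at site $1$ at real time $0.5$ and wakes $q$; at real time $0.55$ the pair is still together (since $q$'s inner time is only $0.06$), so $p$'s sleep attempt fails and $p$ stays active. In system $B$, the presence of $r$ keeps $q$ awake, so $q$ jumps away at real time $0.1$ and $r$ leaves at $0.2$; when $p$ arrives at site $1$ at real time $0.5$ it is alone, and at real time $0.55$ it successfully sleeps. Hence $\tau^A_p(t)=t>0.55=\tau^B_p(t)$ for $t$ slightly above $0.55$, reversing your inequality. The ``delicate case'' you flag is not merely delicate; it genuinely destroys the pointwise order.

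Without this monotonicity, the rest of your argument collapses: the finiteness of the set of particles whose putative trajectory reaches $z$ by inner time $T$ is correct and useful, but the state at $z$ depends on the inner-time clocks of those particles, which in turn depend on interactions with arbitrarily distant particles through influence chains of the type above. The paper handles exactly this difficulty by abandoning any pointwise comparison and instead bounding, for each added particle $(x,i)$, the \emph{set of sites whose evolution is affected by its removal}. This ``influence set'' $Z^{x,i}_t(\pi)$ is dominated stochastically, uniformly in $\pi$, by a translate of a branching-type process $\Zt_t$ with $\E[|\Zt_t|]\le e^{2(1+\lambda)t}$; a mass-transport computation then gives $\sum_x\PP(z\in x+\Zt_T)=\E[|\Zt_T|]<\infty$, so Borel--Cantelli shows that only finitely many of the successively added particles alter $\etal_{|_{[0,T]}}(z)$. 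The branching bound is what replaces your failed monotonicity.
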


We postpone the proof of Theorem~\ref{pro:well_defined} to Section~\ref{sec:proof_existence}. In the next section, we prove Proposition~\ref{pro:condition}, using the particle-wise construction and thus the above existence theorem.

\section{A sufficient condition for non-fixation}
\label{sec:nonfixation}

In this section we prove the non-fixation criterion, Proposition~\ref{pro:condition}.

Fixation as defined so far concerns the state of sites, and will be called here \emph{site fixation}.
In contrast, when each labeled particle eventually fixates, we call it \emph{particle fixation}. Due to Theorems~\ref{thm:agg} and~\ref{pro:well_defined}, it will suffice for us to prove that (\ref{eq:condition}) implies particle non-fixation.

The core of the proof therefore uses the particle-wise construction,
although a monotonicity property (Lemma~\ref{lem:monotone_particle} below) will also require a variant of the local site-wise construction.

\begin{proof}
[Proof of Proposition~\ref{pro:condition}]
We are assuming that $\eta_0(x)$, $x\in\Z^d$, are i.i.d.
We can moreover assume that $\E[\eta_0(o)]<\infty$, otherwise non-fixation holds by~\cite[Corollary 1.2]{AmirGurel-Gurevich10}.

In the particle-wise setting, ``stabilizing $V_n$'' means that we let the process evolve inside $V_n$ (with no particle outside), and stop particles at the boundary, until every particle either has become passive or has exited $V_n$ (since it is a finite-state Markov chain given $\eta_0$, the system reaches such an absorbing state in finite time). Recall also that we denote the law of this process by $\PP_{[V_n]}$.

Let us introduce
\[\Vt_n=V_{n-L_n},\]
where $L_n$ is an integer sequence (e.g.~$\lfloor\log n\rfloor$) such that
\[L_n \to \infty \qquad \text{ but } \qquad \frac{|V_n\setminus \Vt_n|}{|V_n|}\to 0.\]

For $n\in\NN$, introduce the event (which depends on the infinite labeled system)
\[A_n= \big\{\sup_t |Y^{o,1}_t| \ge L_n\big\}= \text{``particle $(o,1)$ reaches distance $L_n$''}.
\]

Let $\Mt_n$ be the number of particles originally in $\Vt_n$ that exit $V_n$. By definition of $\Vt_n$, and translation invariance of $\PP$,
\begin{align*}
\EE[\Mt_n]
	& = \sum_{x\in\Vt_n} \sum_{i\in\N} \PP(\text{particle $Y^{x,i}$ exits $V_n$})\\
	& \le \sum_{x\in\Vt_n} \sum_{i\in\N} \PP(\text{particle $Y^{x,i}$ reaches distance $L_n$ from $x$})\\
	& = |\Vt_n|\sum_{i\in\N}\PP(\text{particle $Y^{o,i}$ reaches distance $L_n$ from $o$})\\
	& = |\Vt_n|\sum_{i\in\N}\PP(\text{particle $Y^{o,1}$ reaches distance $L_n$ from $o$},\ \eta_0(o)\ge i)\\
	& \le |\Vt_n|\sum_{1\le i\le K}\PP(A_n)+|\Vt_n|\sum_{i> K}\PP(\eta_0(o)\ge i)\\
	& = |\Vt_n| K\, \PP(A_n)+|\Vt_n|\,\E[(\eta_0(o)-K)_+]
\end{align*}
for any $K\in\N$.
Therefore,
\[\EE[M_n] \le \EE[\Mt_n]+\mu |V_n\setminus\Vt_n| \le |\Vt_n|\big(K \PP(A_n)+\E[(\eta_0(o)-K)_+]\big)+o(|V_n|).\]
However, by Lemma~\ref{lem:monotone_particle} below,
\(\EE_{[V_n]}[M_n]\le \EE[M_n],\)
hence
\[\PP(Y^{o,1}\text{ does not fixate})=\lim_n \PP(A_n)\ge\frac1K\Big(\limsup_n \frac{\EE_{[V_n]}[M_n]}{|\Vt_n|}-\E[(\eta_0(o)-K)_+]\Big)>0\]
from the assumptions, provided $K$ is chosen large enough.

From Theorem~\ref{thm:agg}, we conclude that a.s.\ the system does not fixate, which finishes the proof of the proposition.
\end{proof}

It remains for us to consider the monotonicity property used above.

\begin{lemma}
\label{lem:monotone_particle}
Let $U$ be a finite subset of $\Z^d$. Let the random variable $M_U$ denote the number of particles starting in $U$ that visit $U^c$.
Then
\(
\E_{[U]}[M_U]\le \E[M_U].
\)
\end{lemma}

In order to show that the distribution of $M_U$ under $\PP_{[U]}$ is dominated by that under $\PP$, we use an intermediate step which is related to the particle-wise construction.
Namely, the distribution of $M_U$ under $\PP$ can be well approximated by a process that starts from $\eta_0 \cdot \I_{V}$ for big enough $V \subseteq \Z^d$. Let $\PP_{V}$ denote the law of such process.

Remark that, under $\PP_{[U]}$, that is, the dynamics where particles freeze outsize $U$, the unlabeled process is sufficient to determine the variable $M_U$, since the latter is simply the total number of particles found outside $U$ after stabilization.
Therefore, the distribution of $M_U$ under $\PP_{[U]}$ can be studied using either the particle-wise or site-wise construction.
However, this is not the case under $\PP$ or $\PP_{V}$ for any $V \supset U$. Indeed, in the unlabeled system, it is not possible to distinguish the particles that have exited and re-entered $U$ from the particles that have met them after their re-entrance.
In order to use the monotonicity properties in the spirit of the site-wise representation, the proof uses a two-color site-wise construction.
In this construction, particles which started in $U$ and have not yet exited $U$ are colored blue, and all other particles are colored red.
There are two stacks of instructions at each site, one for each color.
This way one can distinguish the particles which have not yet exited $U$ from those who have, and use this distinction to define $M_U$ without giving a different label to each particle.

\begin{proof}
[Proof of Lemma~\ref{lem:monotone_particle}]
Let us first show how the lemma follows from
\begin{equation}
\forall\ V \supset U \text{ finite, }\forall T>0,\qquad \PP_{[U]}(M_U^T\in\cdot)\lest \PP_{V}(M_U^T\in\cdot),\label{eq:mono_MT}
\end{equation}
where $M_U^T$ is the number of particles initially in $U$ that visit $U^c$ before time $T$.

First, for all $T>0$, it follows from the well-definedness of the particle-wise construction that the sequence of variables $M_U^T$ in a system which starts from $\eta_0\cdot\I_{U_n}$, for $n\in\N$, is a.s.\ eventually constant (cf.~Section~\ref{sec:particlewise}), for any fixed sequence $U_n \uparrow \Z^d$.
Hence, we can use~\eqref{eq:mono_MT} with $V=U_n$ to get
\[\PP_{[U]}(M_U^T\in\cdot) \lest \PP_{U_n}(M_U^T\in\cdot) \to \PP(M_U^T\in\cdot).\]
Taking expectation gives $\E_{[U]}[M_U^T]\le \E[M_U^T]$.
Finally, since $M^T_U\uparrow M_U$ as $T\to\infty$, we conclude that $\E_{[U]}[M_U]\le \E[M_U]$, finishing the proof.

We now prove~\eqref{eq:mono_MT} by a suitable coupling.
Let $V \supset U$ be finite and let $T>0$.

We introduce a two-colored particle system, where we are only interested at particle counts color by color: the configuration at time $t$ is $\eta(t)=(\eta^B(t),\eta^R(t))\in (\N_{0\varrho}\times\N_{0\varrho})^{\Z^d}$. Initially, blue~(B) particles are the particles that start from $U$, and red~(R) particles are the particles that start from outside $U$. Blue particles become red when they exit $U$, but otherwise, colorblind dynamics will be the same as for ARW.
The precise construction is as follows.

Let us consider two independent families $\boldsymbol\Ir^B$ and $\boldsymbol\Ir^R$ of Diaconis-Fulton instructions (see Section~\ref{sec:sitewise}), to be respectively used by blue particles  and red particles (thus we will only need $\Ir^B$ inside $U$). Let us also be given collections $\Pb^B$ and $\Pb^R$ of independent Poisson point processes at rate $1$, attached to each site, to be respectively used to trigger blue and red topplings. Similarly to the standard site-wise construction (Section~\ref{sec:sitewise}), this enables to construct the process $(\eta^B,\eta^R)$ from a finite initial configuration, with the difference that two clocks (B and R) now run at each site, at a speed given the respective numbers of B and R particles, and each clock triggers a toppling of the same color. Furthermore, blue topplings have the additional effect of changing the color of the jumping blue particle if it jumps out of $U$. Note that topplings affect the state of particles of both colors because red and blue at same site share activity: a red particle can prevent a blue particle from becoming passive, and vice-versa. This construction yields a natural coupling between the process restricted to $U$ (for which red particles play no role) and the process started from particles in $V$.

Unfortunately, this bi-color representation is not abelian.
Nevertheless, since each color uses a different stack of instructions, adding red particles has the only effect, regarding blue particles, of enforcing activation of some of them at some times. The part of the proof of Lemma~4 in~\cite[v2 on arXiv]{RollaSidoravicius12} that proves monotonicity of $L_t^M$ (and thus of $h_t^M$) in $M$ adapts here to show increase of the number of blue topplings $h_t^{(B)}$ when red particles are added: Property~2 is unchanged, and new activations only help said monotonicity.

Since red topplings do not change the total number $\eta^{(B)}(\Z^d)$ of blue particles, and blue topplings may only conserve or decrease that number, we deduce that $M_U^T=\eta^{(B)}_0(\Z^d)-\eta^{(B)}_T(\Z^d)$ cannot decrease when switching from the restricted process to the process started from $V$, which gives~\eqref{eq:mono_MT}.
\end{proof}

\section{Well-definedness of the particle-wise construction}
\label{sec:proof_existence}

In this section, we prove Theorem~\ref{pro:well_defined}. To that aim, let us first check that it suffices to prove the following:

\begin{proposition}\label{pro:limit_exists}
Assume the distribution $\eta_0$ is such that
\[\sup_{x\in \Z^d}\E[\eta_0(x)]<\infty.\]
Then, for any increasing sequence $(W_n)_n$ of finite subsets of $\Z^d$ such that $\bigcup_n W_n=\Z^d$, almost surely, for all $z\in \Z^d$ and all $T>0$, the sequence of processes
\begin{equation}
	\etal_{|_{[0,T]}}(z;\eta_0\cdot\I_{W_n},\X,\Pb);\quad n\ge0,
\nonumber
\end{equation}
is eventually constant.
\end{proposition}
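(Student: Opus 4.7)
The plan is to identify, for each fixed $z\in\Z^d$ and $T>0$, an almost-surely finite \emph{cluster of influence} $\mathcal V^\star\subseteq\Z^d$, determined only by the putative randomness $\X$, such that the process $\etal_{|_{[0,T]}}(z\pv\nu,\X,\Pb)$ depends on any finite $\nu\le\eta_0$ only through $\nu$ restricted to $\mathcal V^\star$. Once $W_n\supseteq\mathcal V^\star$ the process at $(z,[0,T])$ becomes constant in $n$; a countable union over $(z,T)\in\Z^d\times\N$ together with monotonicity in $T$ gives the statement.

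For each particle $(y,j)$ with $j\le\eta_0(y)$, set $R^{y,j}=X^{y,j}([0,T])\subset\Z^d$. Since $X^{y,j}$ is a rate-$1$ continuous-time walk, $R^{y,j}$ is almost surely finite (Poisson number of jumps) and contains $y$. Crucially, because the inner time of any particle is dominated by real time, the sites actually occupied by $(y,j)$ up to real time $T$ in \emph{any} dynamics lie inside $R^{y,j}$. Build $\mathcal V^\star$ iteratively by $\mathcal V_0=\{z\}$ and
\[
\mathcal V_k=\mathcal V_{k-1}\cup\bigcup\big\{R^{y,j}\pp j\le\eta_0(y),\ R^{y,j}\cap\mathcal V_{k-1}\ne\emptyset\big\},
\]
letting $\mathcal V^\star=\bigcup_k\mathcal V_k$. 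The analogous cluster $\mathcal V^\star_n$ built from $\eta_0\cdot\I_{W_n}$ is monotone in $n$ and, by induction on $k$, coincides with $\mathcal V^\star$ as soon as $\mathcal V^\star\subseteq W_n$.

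The first step is a \emph{locality lemma}: for any finite configuration $\nu\le\eta_0$, the process $\etal_{|_{[0,T]}}(z\pv\nu,\X,\Pb)$ is determined by $\nu$ restricted to the $\nu$-cluster and by $(X^{y,j},\P^{y,j})$ for cluster particles only. This is essentially forced by the cluster construction: since actual trajectories are dominated by putative ranges, a non-cluster particle cannot visit any site in the cluster (else its range would intersect $\mathcal V^\star_\nu$, contradicting non-membership); hence non-cluster particles never interact with cluster particles, the dynamics on the cluster is self-contained, and all particles present at $z$ during $[0,T]$ are cluster particles. Applying this lemma to $\nu=\eta_0\cdot\I_{W_n}$ for large $n$ yields eventual constancy.

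The main obstacle is proving the almost-sure finiteness of $\mathcal V^\star$. I plan to handle this by an exploration/tail estimate: one bounds the probability that $\mathcal V^\star$ reaches distance $r$ from $z$ by summing, over all finite chains $(y_0,j_0),\dots,(y_m,j_m)$ of particles whose putative ranges consecutively intersect and reach from $z$ to outside $B(z,r)$, the corresponding probability. The key ingredients are (i) the uniform mean bound $\E[\eta_0(y)]\le\mu$, (ii) the identity $\sum_x\PP(x\in R^{y,1})=\E|R_T|\le 1+T$ per particle, and (iii) tail estimates on the putative range of $X^{y,1}$ coming from the exponential tail of its Poisson jump count. Iterating a one-step bound and summing in $r$ allows Borel--Cantelli to conclude $\mathcal V^\star\subset B(z,r)$ for some a.s.\ finite random $r$. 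The delicate point will be controlling chains of arbitrary length $m$ uniformly; a naive one-generation branching bound need not be subcritical, so a more careful combinatorial or multiscale estimate on chains will be required.
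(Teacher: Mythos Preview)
Your locality lemma is correct, and the reduction ``once $W_n\supseteq\mathcal V^\star$ the process at $z$ is constant'' is fine. The fatal problem is the step you flag yourself: $\mathcal V^\star$ is \emph{not} almost surely finite. For any fixed density of particles and all sufficiently large $T$, the putative ranges $R^{y,j}=X^{y,j}([0,T])$ are large connected sets (e.g.\ in $d=1$ they are intervals of typical length $\sim\sqrt T$), and the component of $z$ in the Boolean-type model ``sites connected through intersecting ranges'' percolates. No multiscale or chain estimate will rescue this, because the claim is simply false: the hypothesis $\sup_x\E[\eta_0(x)]<\infty$ places no upper bound on the density, and even at density $1$ the cluster is a.s.\ infinite for large $T$. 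So your scheme of finding a finite deterministic set $\mathcal V^\star=\mathcal V^\star(z,T,\X)$ on which the dynamics near $z$ is self-contained cannot work.

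The paper's proof sidesteps this by never asking for a finite cluster. Instead it refines $(W_n)$ so that consecutive terms differ by a single site $x_n$, and observes that a change in $\etal_{|_{[0,T]}}(z)$ between steps $n-1$ and $n$ means some particle $(x_n,i)$ \emph{influences} $z$. The influence set $Z^{x,i}_T(\pi)$ of a single particle is then stochastically dominated, uniformly in the finite configuration $\pi$, by a branching-type set $x+\widetilde Z_T$ with $\E|\widetilde Z_T|\le e^{2(1+\lambda)T}$. This branching process is supercritical, but that is harmless: what matters is only that its mean is finite at time $T$. A mass-transport computation then gives
\[
\sum_{x}\E[\eta_0(x)]\,\PP\big(z\in x+\widetilde Z_T\big)\le \sup_x\E[\eta_0(x)]\cdot\E|\widetilde Z_T|<\infty,
\]
so the expected number of $n$ at which the sequence changes is finite, and Borel--Cantelli concludes. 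The conceptual difference from your approach is that the paper bounds, for each particle separately, the probability that it influences $z$, and sums; you tried to carve out in advance the entire set of particles that \emph{could} influence $z$, which is typically infinite.
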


\begin{proof}
[Proof of Theorem~\ref{pro:well_defined}.]
In order to prove well-definedness, it suffices to justify that the limit
\begin{equation}
	\etal_{|_{[0,T]}}(z\,;\,\eta_0,\X,\Pb)=\lim_n\ \etal_{|_{[0,T]}}(z\,;\,\eta_0\cdot\I_{w+U_n},\X,\Pb),
\end{equation}
which exists almost surely for all $w\in\Z^d$ due to the above proposition, does almost surely not depend on $w$.
This follows as well from the above proposition applied to an increasing sequence $(W_n)_n$ of subsets of $\Z^d$ that has subsequences in common with all sequences $(w+U_n)_n$ for $w\in\Z^d$.
\end{proof}

The proof of Proposition~\ref{pro:limit_exists} is based on the study of the propagation of the \emph{influence} of a particle, which we shall bound stochastically by a branching process. That way, we control the ``radius of influence'' of particles and discard the possibility that the state of a site depends on a infinite chain of influences, which would be the obstacle for existence.

\begin{definition*}
	For a finite configuration $\eta_0$, putative paths $\X$ and clocks $\Pb$, for $x,z\in \Z^d$, $i\in\N$ and $t\ge0$, the particle $(x,i)$ \emph{has an influence on site $z$ during $[0,t]$ in $(\eta_0,\X,\Pb)$}, and we shall write $(x,i)\!\!\influence{[0,t],\eta_0}\!\!z$, if $\eta_0(x)\ge i$ and
	\[\etal_{|_{[0,t]}}(z\,;\,\eta_0,\X,\Pb)\ne\etal_{|_{[0,t]}}(z\,;\,\eta_0-\delta_x,\X^*,\Pb^*),\]
	where $\X^*$ and $\Pb^*$ are obtained by removing the particle $(x,i)$ and shifting down the particles coming next, i.e., $X^{*(v,j)}=X^{(v,j+1)}$ if $v=x$ and $j\ge i$, and $X^{*(v,j)}=X^{(v,j)}$ else.
\end{definition*}

In other words, $(x,i)$ has an influence on $z$ during $[0,t]$ with initial condition $\eta_0$ if removing the particle $(x,i)$ in $\eta_0$ (but keeping the same putative walks and sleeping times everywhere else) would change the evolution of the process at $z$ at some time before $t$.

Assume that $\X$ and $\Pb$ are i.i.d.\ putative paths and clocks (as in the finite particle-wise construction).
For $x\in \Z^d$ and $t\ge0$, and any finite initial condition~$\pi$, we let
\[Z^{x,i}_t(\pi)=\{z\in \Z^d\,:\,(x,i)\!\!\influence{[0,t],\pi}\!\!z\}.\]
It is the set of vertices influenced during $[0,t]$ by the removal of the particle $(x,i)$ in $\pi$. Note that if $i>\pi(x)$ then $Z^{x,i}_t(\pi)=\emptyset$.

\begin{lemma}\label{lem:prop_Z}
One can construct a process $(\Zt_t)_{t\ge0}$ that takes values in finite subsets of $\Z^d$ and such that
\begin{itemize}
	\item for all $(x,i)\in \Z^d\times\N$, $t\ge0$, for any finite configuration $\pi$, $Z^{x,i}_t(\pi)\subset_{\rm st}x+\Zt_t$,\\
		i.e., for any $A\subset \Z^d$, $\PP(A\subset Z^{x,i}_t(\pi))\le \PP(A\subset x+\Zt_t)$;
	\item for all $t\ge0$, $\E\big[|\Zt_t|\big]\le e^{2(1+\lambda)t}$.
\end{itemize}
\end{lemma}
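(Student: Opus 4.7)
My plan is to construct $\Zt_t$ as the range of a continuous-time branching random walk that dominates the spread of the influence set, uniformly over $(x,i,\pi)$. Concretely, I would take a BRW started with a single particle at the origin, where each alive individual spawns a new individual at rate $2(1+\lambda)$ --- the factor $2$ and $(1+\lambda)$ arising, respectively, from the two coupled copies of the system (see below) and from the per-particle rate of move/sleep events --- with the offspring placed at a displacement drawn from a suitable mixture of $p(\cdot)$ and $\delta_o$. Setting $\Zt_t$ to be the set of sites ever occupied up to time $t$, the bound $\EE[|\Zt_t|]\le e^{2(1+\lambda)t}$ is immediate from the standard linear-birth-process formula $\EE[N_t]=e^{rt}$ with $r=2(1+\lambda)$, since $|\Zt_t|\le N_t$.

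For the stochastic domination, I would couple the two systems starting from $\pi$ and from $\pi-\delta_x$ (with the shifted labels $(\X^*,\Pb^*)$ on the modified copy) using the common data $(\X,\Pb)$, and track the disagreement set $D_t$ between the two $\etalb$-configurations. One has $Z^{x,i}_t(\pi)=\bigcup_{0\le s\le t} D_s$ and $D_0=\{x\}$. A site $z$ can enter $D_\cdot$ only through an event attached to a specific \emph{tagged} particle $(v,j)$ (one that has previously been involved in the discrepancy region) firing in one of the two coupled copies: either a jump of the putative walk $X^{v,j}$ (contributing a step distributed as $p$) or a tick of the sleep clock $\PPP^{v,j}$ (contributing no displacement). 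Summing over the two copies, the rate of such events per currently tagged particle is bounded by $2(1+\lambda)$, and by identifying each such event with a birth of the dominating BRW one obtains the stochastic containment $Z^{x,i}_t(\pi)\subset_{\rm st} x+\Zt_t$, as required.

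The main obstacle is to make the notion of a tagged particle precise, so that every newly created discrepancy is uniquely and legitimately charged to a pre-existing member of $D_\cdot$, with total rate at most $2(1+\lambda)$ per such member, uniformly in $\pi$. Because the site-based rate of topplings is proportional to the number of active particles there, and thus not bounded a priori, one cannot use a naive site-indexed clock; instead one has to work with the particle-indexed Poisson clocks $\PPP^{v,j}$ and putative walks $X^{v,j}$, which carry intrinsic rate $1+\lambda$ per particle. The abelian property of the site-wise construction recalled in Section~\ref{sec:sitewise} is useful here, since it allows one to reorder events so that each new discrepancy is attributable to a single identifiable event attached to a tagged particle, producing the uniform-in-$\pi$ rate needed for the branching comparison.
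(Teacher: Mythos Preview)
Your overall strategy --- dominating the influence set by the range of a branching random walk and reading off $\E[|\Zt_t|]\le e^{2(1+\lambda)t}$ from the Yule formula --- is exactly the paper's, and the numerical rate $2(1+\lambda)$ coincides. The implementation, however, is different, and the difference is where your gap lies.

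The paper does not couple two copies and track a disagreement set. It augments a \emph{single} system with two extra particle states, $I$ (influenced) and $J$ (trail): the distinguished particle $(x,i)$ is put in state $I$; an $I$-particle jumps at rate~$1$ and, upon each jump, deposits a $J$ at the site it left \emph{and} a new $I$ at the landing site; and at rate~$\lambda$ every $I$ or $J$ particle emits a new $I$ at its own site. This last rule is the crucial bookkeeping device: it covers the case where an ordinary active particle sharing a site with an $I/J$ particle tries to sleep and is reactivated solely by that $I/J$ particle, thereby becoming influenced. Since at most one such ordinary particle per site can be affected, this costs rate $\le\lambda$ per $I/J$ particle. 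So the factor $2$ in the paper's bound comes from the two offspring created at each $I$-jump (one $J$ left behind, one $I$ at the landing site), not from running two coupled copies.

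In your two-copy scheme, this same mechanism is precisely the gap you flag as ``the main obstacle'' and do not close: an \emph{untagged} particle $U$, sitting at a site occupied by a tagged particle in one copy but not the other, becomes tagged via $U$'s \emph{own} sleep event --- an event that is not attached to any tagged particle. Your accounting (rate $2(1+\lambda)$ per tagged particle from its own move/sleep events across the two copies) does not charge for this, so the branching comparison as stated is incomplete. A repair along the paper's lines would add an extra rate-$\lambda$ birth per tagged particle and per copy, together with the ``at most one relevant neighbor'' argument; but that is exactly the content you still owe. Finally, the appeal to the site-wise abelian property is misplaced: the whole argument lives in the particle-wise construction, which has no such abelianness, and the paper's proof makes no use of it.
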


Let us deduce Proposition~\ref{pro:limit_exists} and then prove the lemma.

\begin{proof}
[Proof of Proposition~\ref{pro:limit_exists}.]
Assume $\sup_x\E[\eta_0(x)]<\infty$.

Let $z\in \Z^d$, $T>0$ and let $(W_n)_n$ be an increasing sequence of finite subsets of $\Z^d$ such that $\bigcup_n W_n=\Z^d$.
Note first that, up to introducing new terms inside the sequence, we may assume that $(W_n)_n$ is of the form
\[W_n=\{x_1,\ldots,x_n\},\]
where $\Z^d=\{x_n\,:\,n\in\N^*\}$ and the $x_n$'s are distinct.
We have
\begin{align}
	& \PP(\text{the sequence $(\etal_{|_{[0,T]}}(z\pv\eta_0\cdot\I_{W_n},\X,\Pb))_{n\ge0}$ is not eventually constant})\notag\\
	& =\PP\big(\text{for infinitely many $n$, $\etal_{|_{[0,T]}}(z\pv\eta_0\cdot\I_{W_n},\X,\Pb)\ne\etal_{|_{[0,T]}}(z\pv\eta_0\cdot\I_{W_{n-1}},\X,\Pb$})\big)\label{eq:major_stat}
\end{align}
However, for all $n$, since $W_n=W_{n-1}\cup\{x_{n}\}$, the fact that
\begin{equation}
	\etal_{|_{[0,T]}}(z\pv\eta_0\cdot\I_{W_n},\X,\Pb)\ne\etal_{|_{[0,T]}}(z\pv\eta_0\cdot\I_{W_{n-1}},\X,\Pb)\label{eq:ineq}
\end{equation}
implies that, for some $i\le\eta_0(x_n)$, $(x_n,i)$ has an influence on $z$ before time $T$ in $(\eta_0\cdot\I_{W_{n-1}}+i\delta_{x_n},\X,\Pb)$, i.e., $z\in Z^{x_n}_T(\eta_0\cdot\I_{W_{n-1}}+i\delta_{x_n})$. This is seen by adding particles at $x_n$ one by one until a change appears in the configuration at $z$ before time~$T$.

On the other hand,
\begin{align}
	&\E\Big[\#\{n\in \N\pp\exists i\le\eta_0(x_n),\,z\in Z_T^{x_n,i}(\eta_0\cdot\I_{W_{n-1}}+i\delta_{x_n})\Big]\notag\\
	& \le \sum_{n\in\N}\sum_{i\in\N}\PP(i\le\eta_0(x_n),\,z\in Z_T^{x_n,i}(\eta_0\cdot\I_{W_{n-1}}+i\delta_{x_n}))\notag\\
	& \le \sum_{n\in\N}\sum_{i\in\N}\PP(i\le\eta_0(x_n))\PP(z\in x_n+\Zt_T)\notag\\
	& = \sum_{x\in \Z^d}\E[\eta_0(x)]\PP(z\in x+\Zt_T)\notag\\
	& \le \sup_{x\in \Z^d}\E[\eta_0(x)]\sum_{x\in \Z^d}\PP(z\in x+\Zt_T)\notag\\
	& = \sup_{x\in \Z^d}\E[\eta_0(x)]\sum_{x\in \Z^d}\PP(2z-x\in z+\Zt_T)\label{eq:unimod}\\
	& =\sup_{x\in \Z^d}\E[\eta_0(x)]\E[|\Zt_T|]\notag\\
	& \le \sup_{x\in \Z^d} \E[\eta_0(x)]e^{2(1+\lambda)T}<\infty\notag,
\end{align}
where the (in)equalities follow from the properties listed in Lemma~\ref{lem:prop_Z}, and linearity of expectation. This proves that almost surely \eqref{eq:ineq} holds only for finitely many $n$, which shows that the last probability in~\eqref{eq:major_stat} equals zero, and concludes the proof of Proposition~\ref{pro:limit_exists}.
\end{proof}

\begin{proof}[Proof of Lemma~\ref{lem:prop_Z}]

Let $(x,i)\in \Z^d\times\N$, $T>0$ and a finite configuration $\pi$ be given. We aim at bounding the set of vertices influenced before time $T$ by the presence in $\pi$ of the particle $(x,i)$. We may assume $i\le\pi(x)$, for otherwise the bound is trivial.

Notice that particles behave independently \emph{except} when a particle gets reactivated by another --- we consider here that particles may always deactivate at rate~$\lambda$ but are reactivated instantaneously if another particle shares the same site ---, hence the influence only propagates in this case, and more precisely only when the reactivation may not be due to a third particle.
Thus, a particle is influenced by $(x,i)$ during $[0,t]$ if either it is the particle $(x,i)$ itself, or if it was reactivated from passive state before time $t$ by sharing the same site only with particles that were influenced before. If a particle is influenced, its inner time may change when the particle $(x,i)$ gets removed; in order to keep track of every possibility and bound the number of influenced particles, we shall consider all potential paths simultaneously, and have influenced particles both stay at a site and jump to the next one at the same time by doubling those particles.

Consider the system with initial condition $\pi$. In addition to the A (active) and S (passive, or sleeping) states, let us introduce new states I (influenced) and J (``trail'' of influenced particles), and modify the system as follows:
\begin{itemize}
	\item at time 0, the particle $(x,i)$ is put in state I (while the others in $\pi$ are A);
	\item particles in states A and S behave as in the usual ARW, i.e., jump at rate 1 (according to their putative path $\X$), deactivate at rate $\lambda$ (according to their clock $\Pb$) and reactivate as soon as another particle is present;
	\item particles in state I jump at rate 1 (according to their putative path given by $\X$), and leave a new particle in state J at the site they just left;
	\item particles in state J do not move (and thus do not need to be given a path or clock);
	\item when a particle in state S is reactivated by a particle in state I or J, and no particle of state A is present at the same site, then it switches from state S to state I.
\end{itemize}
It is important to note that the initial system is embedded in the new one, with some particles having state I or J instead of A and S. The state J enables to cover the case of particles slowed down by deactivations.
Also, it follows from the previous discussion that, at any time~$t$, all the vertices influenced by $(x,i)$ contain particles in state I or J. In other words, $Z^{x,i}_t$ is included in the set of locations of I and J particles at time~$t$.

The set $Z^{x,i}_t$ depends however on the restriction of the initial condition, i.e., on $\pi$. Still, one can \emph{stochastically} dominate $Z^{x,i}_t$, uniformly in $\pi$, by the process $x+\Zt_t$, where $\Zt_t$ is defined as the set of locations of particles in state I or J at time $t$ in the following simplified system:
\begin{itemize}
	\item at time 0, the system starts with only one particle, which is at $o$ and in state I;
	\item particles in state I jump at rate 1 (according to the jump distribution $p(\cdot)$), and leave a new particle in state~J at the site they just left and a new particle in state~I at the site they land on;
	\item particles in state J do not move;
	\item at rate $\lambda$, particles in state I or J produce a new particle in state I at the same site.
\end{itemize}
Indeed, the additional I particles are meant to cover the cases of conversion from A or S to I; they correspond to the case of fastest possible conversion and lead to an upper bound in general. In the case of jump of a particle in state I, the new particle introduced above covers the case when the target site contains already one particle in state S (there may be at most~1). In the last point, the new particle corresponds to a particle switching to state S that would be reactivated exclusively by I and J particles; since there may be at most one such particle, this happens at rate (at most) $\lambda$.

In summary, each particle gives birth to at most 2 new particles, at rate at most $\max(1,\lambda)\le 1+\lambda$, hence
\[\E[|\Zt_t|]\le e^{2(1+\lambda)t}.\]
This concludes the proof.
\end{proof}

\section{Well-definedness of the site-wise construction}
\label{sec:globalsitewise}

This section is not needed in the rest of the paper.
We show how the arguments from~\cite{RollaSidoravicius12} can actually be used to construct the whole evolution $(\eta_t)_{t\geq 0}$ as a translation-covariant function of $(\eta_0,\Ir,N)$.
Assume that
\begin{equation}
\label{eq:ergodicfinite}
\text{$\eta_0$ is spatially ergodic and $\E[\eta_0(o)]<\infty$.}
\end{equation}

In \S\ref{sec:sitewisefinite}, the site-wise representation $(\eta_0,\Ir)$ plus random clocks $N$ were used for a \emph{local site-wise construction} of the process.
The law $\PP_{[V]}$ indicated initial condition replaced by $-\infty$ outside $V$.
In \S\ref{sec:nonfixation} we introduced $\PP_{V}$ to indicate the initial condition being replaced by $0$ outside $V$.
This construction also has the property that occupation times $L^x(t)$, and thus the toppling counter $h_t(x)$, are increasing in the initial configuration $\eta_0$.
See the more detailed proof of~\cite[Lemma~4]{RollaSidoravicius12}, available at~\href{http://arXiv.org/abs/0908.1152v2}{arXiv:0908.1152v2}.

It follows from general results about particle systems on non-compact state spaces~\cite{Andjel82} that a stochastic process $(\eta_t)_{t\geq 0}$ corresponding to the ARW dynamics exists and is a Feller process with respect a certain topology, see~\cite[Footnote 6]{RollaSidoravicius12}.
This topology is stronger than product topology (and necessarily so, since it is always possible to find initial configurations that are empty in an arbitrarily large box and still can affect the configuration at $o$ at arbitrarily small times with arbitrarily high probability), but fortunately it is weak enough so that local events are described by a limit of finite systems.
More precisely, for every event $E$ that depends on a finite space-time window $\eta_{|_{[0,T]\times B(o,n)}}$ of the process $\eta$, under assumption~(\ref{eq:ergodicfinite}) we have
\begin{equation}
\label{eq:approx}
\PP(E) = \lim_{V\uparrow \Z^d} \PP_{V}(E).
\end{equation}

This local site-wise construction also has the property that, for each $x\in\Z^d$ and $T<\infty$, the expected number of times that other sites send particles to $x$ during $[0,T]$ is finite.

Monotonicity of $h_t$ in both $V$ and $t$ allow a certain limit swap. Using this and the last property above, one proves the equivalence between fixation for the evolution $(\eta_t)_{t\geq 0}$ and stabilizability of $\eta_0$ in the timeless site-wise representation.

All this is done without requiring that the process $(\eta_t)_{t\geq 0}$ be constructed directly from $(\eta_0,\Ir,N)$.
In fact, we are unaware of such \emph{global site-wise construction} of the ARW having been considered in the literature. Thanks to a question raised by the anonymous referee, when finishing the revision of the present paper we had the pleasant surprise to realize that the above properties are actually enough to ensure a global site-wise construction.

Assuming the reader went through the proof of~\cite[Lemma~4]{RollaSidoravicius12}, the field $h_t$ is increasing in $V$, so it has a limit that does not depend on the particular increasing sequence $V\uparrow \Z^d$.
Moreover, in the limiting field, for each fixed $T$ and each site $x$, $h_T(x)$ is finite and the set of sites $z$ such that $\Ir^{z,k}=x-z$ for some $k \leq h_T(z)$ is also finite.
So given $t\in[0,T]$ and a sequence $V\uparrow \Z^d$, $(h_t(z))$ is eventually constant for $z$ in this set, which by Local Abelianness implies that $\eta_t(x)$ is eventually constant, and does not depend on the particular sequence of domains $V$.
So convergence holds simultaneously for all $x\in\Z^d$ and $t\in\Q_+$, for $t\in\R_+$ we take limits from the right, and by~(\ref{eq:approx}) we can conclude that the resulting process $(\eta_t)_{t\geq 0}$ has the correct distribution.

\section{Comments on extensions of the results}
\label{sec:extensions}

Let us briefly comment on validity and relevance of our results in more general settings.

Let $G=(V,E)$ be a graph with finite degrees, and $p(\cdot,\cdot)$ be a transition kernel on $V$. Translation invariance is replaced by the following: Assume that there is a transitive unimodular subgroup $\Gamma$ of automorphisms that leave $p(\cdot,\cdot)$ invariant, i.e., $\Gamma$ is a subgroup of automorphisms and, if we write $S(x)=\{\gamma\in\Gamma\pp\gamma x=x\}$ for the stabilizer of $x\in V$,
\begin{align*}
	\text{(transitivity)}&\text{ for all $x,y\in V$, there is $\gamma\in\Gamma$ such that $\gamma x=y$;}\\
\text{(unimodularity)}&\text{ for all $x,y\in V$, we have $|S(x)y|=|S(y)x|$;}\\
	\text{(invariance)}&\text{ for all $x,y\in V$ and $\gamma\in\Gamma$, $p(\gamma x,\gamma y)=p(x,y)$}.
\end{align*}
Unimodularity implies mainly that the mass transport principle is satisfied: for any function $F:V\times V\to[0,1]$ that is $\Gamma$-invariant ($F(\gamma x,\gamma y)=F(x,y)$ for any $\gamma\in\Gamma$, $x,y\in V$),
\[\sum_{y\in V}F(x,y)=\sum_{y\in V}F(y,x)\]
We refer the reader to~\cite[Chapter 8]{LyonsPeres16} for a thorough introduction.

The above is the setting in which \cite{AmirGurel-Gurevich10} proves Theorem~\ref{thm:agg}.

The well-definedness of the particle-wise construction (Theorem~\ref{pro:well_defined}) carries on as well, under the same assumption $\sup_{x\in V}\E[\eta_0(x)]<\infty$, with the same proof, except that we choose an origin vertex $o\in V$
and in Lemma~\ref{lem:prop_Z}, $x+\Zt_t$ is replaced by $\gamma_x\Zt_t$ where, for all $x\in V$, $\gamma_x\in\Gamma$ is chosen arbitrarily so that $\gamma_x(o)=x$, and one can add the property that $\Zt_t$ is invariant in distribution under any $\gamma\in S(o)$.
Then the equality~\eqref{eq:unimod} is changed with the relation
\[\sum_{x\in V}\PP(z\in\gamma_x(\Zt_t))=\sum_{x\in V}\PP(x\in\gamma_z(\Zt_t)),\]
which is a consequence of the mass transport principle. Indeed, the function $F:(x,z)\mapsto\PP(x\in\gamma_z(\Zt_t))$ is $\Gamma$-invariant because of the previously mentioned invariance of $\Zt_t$.

In particular, these extensions of Theorems~\ref{pro:well_defined} and~\ref{thm:agg} together give that for i.i.d.\ initial conditions of mean $\mu>1$, the ARW a.s.\ does not fixate (cf.~\cite[Corollary 1.3]{AmirGurel-Gurevich10}).

It is then a simple matter to extend Proposition~\ref{pro:condition} to amenable graphs as follows:

\begin{proposition}
\label{pro:condition_extended}
Let $G=(V,E)$ be an amenable graph, and assume that the jump distribution $p(\cdot,\cdot)$ is invariant under a transitive group of automorphisms of $G$.
Let $V_n \subseteq V$ be finite and satisfy $\frac{|\partial V_n|}{|V_n|}\to 0$.
Consider the ARW model with i.i.d.\ integrable initial configuration on $G$. For each $n\in \N$, consider the law $\PP_n$ of its evolution restricted to $V_n$ and let $M_n$ be the random variable counting how many particles exit $V_n$ during stabilization.
If
\[
\limsup \frac{\E_n M_n}{|V_n|}>0,
\]
then the system does not fixate.
\end{proposition}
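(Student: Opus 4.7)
The plan is to transpose the proof of Proposition~\ref{pro:condition} from Section~\ref{sec:nonfixation} to this setting, replacing translation invariance of $\Z^d$ by $\Gamma$-covariance and box geometry by the Følner property. As preliminary inputs, I would use the extensions of Theorem~\ref{thm:agg} and Theorem~\ref{pro:well_defined} discussed earlier in this section, so that it suffices to prove that particle $(o,1)$ is non-fixating with positive probability. I would also rely on Lemma~\ref{lem:monotone_particle}, whose two-color site-wise proof is purely combinatorial and does not use translation invariance, so it still yields $\E_{[V_n]}[M_n]\le\E[M_n]$ in this setting.

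The first step is to build an inner approximation $\tilde V_n = \{x \in V_n : B(x, L_n) \subset V_n\}$, where $B(\cdot, L_n)$ denotes the ball of radius $L_n$ in the graph distance on $G$ and $L_n\to\infty$ is chosen slowly enough that $|\tilde V_n|/|V_n|\to 1$. This is possible because transitivity gives a uniform bound on vertex degrees, hence $|V_n\setminus \tilde V_n| \le |\partial V_n|\cdot |B(o,L_n)|$, and the Følner hypothesis is $|\partial V_n|/|V_n|\to 0$. I then let $\tilde M_n$ count the particles initially in $\tilde V_n$ that exit $V_n$ under the infinite-volume measure $\PP$.

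The second and central step is a $\Gamma$-covariance argument. Any particle starting at $x\in\tilde V_n$ that exits $V_n$ must reach distance at least $L_n$ from $x$, so
\[
\E[\tilde M_n] \le \sum_{x\in\tilde V_n}\sum_{i\in\N} \PP\bigl(\eta_0(x)\ge i,\ Y^{x,i}\text{ reaches distance }L_n\text{ from }x\bigr).
\]
Using transitivity of $\Gamma$ on $V$, the $\Gamma$-covariance of the particle-wise construction, and the $\Gamma$-invariance in law of the i.i.d.\ data $\xib = (\eta_0,\X,\Pb)$, each summand equals its value at $x=o$. Setting $A_n=\{\sup_t d(Y^{o,1}_t,o)\ge L_n\}$ and truncating the $i$-sum at any $K\in\N$ exactly as in Section~\ref{sec:nonfixation} gives
\[
\E[\tilde M_n] \le |\tilde V_n|\bigl(K\,\PP(A_n)+\E[(\eta_0(o)-K)_+]\bigr).
\]

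To conclude, the chain $\E_{[V_n]}[M_n] \le \E[M_n] \le \E[\tilde M_n] + \mu|V_n\setminus\tilde V_n|$ combined with $|V_n\setminus\tilde V_n|=o(|V_n|)$ and the assumption $\limsup\E_n[M_n]/|V_n|>0$ forces $\liminf \PP(A_n)>0$ once $K$ is large enough to push the tail term below the $\limsup$. Since the events $A_n$ are increasing and their union is $\{Y^{o,1}\text{ is not fixating}\}$, this gives positive probability of particle non-fixation, and the extended version of Theorem~\ref{thm:agg} promotes this to almost sure non-fixation. The main point to verify carefully is that $\Gamma$-covariance really does collapse the sum over $x\in\tilde V_n$ into $|\tilde V_n|$ times the probability at $o$—which follows from $\Gamma$-invariance of the law of $\xib$ and of the graph distance, independently of the arbitrary choice of $\gamma_x\in\Gamma$ sending $o$ to $x$; beyond that routine check, the argument is a direct translation of the Euclidean one.
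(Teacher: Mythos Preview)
Your proposal is correct and follows precisely the approach the paper intends: the paper's own ``proof'' of Proposition~\ref{pro:condition_extended} consists of the single sentence that one need only redefine $\Vt_n\subset V_n$ so that $|V_n\setminus\Vt_n|=o(|V_n|)$, and your choice $\Vt_n=\{x\in V_n:B(x,L_n)\subset V_n\}$ with $L_n\to\infty$ slowly is exactly the natural way to do this via the F{\o}lner condition. One small slip: the events $A_n$ are \emph{decreasing} in $n$ (since $L_n\uparrow\infty$), and it is their \emph{intersection} that is contained in $\{Y^{o,1}\text{ is not fixating}\}$; but since fixation forces the trajectory to be bounded, you still get $\lim_n\PP(A_n)=\PP(\bigcap_n A_n)\le\PP(Y^{o,1}\text{ does not fixate})$, which is the inequality you need.
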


The only modification in the proof consists in redefining $\Vt_n\subset V_n$, which must satisfy $|V_n\setminus\Vt_n|=o(|V_n|)$.
Recall also that any transitive subgroup of automorphisms of an amenable graph is unimodular (Soardi and Woess, cf.~\cite{LyonsPeres16}).

As a motivation for such generality, one may notice that, with this proposition, the proof of Theorem~\ref{thm:main} seamlessly enables to obtain non-fixation for ARW on $\ZZ\times G$ where $G$ is an amenable, transitive graph, provided the jump distribution $p(\cdot,\cdot)$ is invariant under a transitive group of automorphisms and such that the random walk $(X_n)_n$ with jump distribution $p(\cdot,\cdot)$ on $\ZZ\times G$ satisfies almost surely $\pi_1(X_n)\limites{}n+\infty$,
where $\pi_1$ is the projection onto $\Z$, and the sleep rate $\lambda$ is small enough.

\section*{Acknowledgments}

This research started during thematic trimester ``Disordered Systems, Random Spatial Processes and Some Applications'' at the Henri Poincaré Institute. LR thanks the IHP and the Fondation Sciences Mathématiques de Paris for generous support. Both authors thank MathAmSud grant LSBS-2014 for supporting this collaboration in different occasions.
LT thanks the Department of Mathematics at the University of Buenos Aires for hospitality. This project was further supported by grants PICT-2015-3154, PICT-2013-2137, PICT-2012-2744, PIP 11220130100521CO, Conicet-45955 and MinCyT-BR-13/14.

\renewcommand{\baselinestretch}{1}
\parskip 0pt
\small
\bibliographystyle{bib/leo}
\bibliography{bib/leo}

\end{document}